\def\N{{\mathbb{N}}}
\def\Z{{\mathbb{Z}}}
\def\P{{\mathbb{P}}}
\def\glue{{c_*}}
\def\matching{{C_*}}
\def\slab{{\mathbb S}}
\newtheorem{theorem}{Theorem}[section]
\newtheorem{corollary}[theorem]{Corollary}
\newtheorem{lemma}[theorem]{Lemma}
\newtheorem{proposition}[theorem]{Proposition}
\newtheoremstyle{likedef}
  {}%
  {}%
  {}%
  {}
  {\bfseries}%
  {.}%
  {.5em}%
  {}%
\theoremstyle{likedef}
\newtheorem{remark}[theorem]{Remark}
\numberwithin{equation}{section}
\begin{document}

\title{Crossing probabilities for critical Bernoulli percolation on slabs}

\author{
Deepan Basu
\thanks{Max-Planck Institute for Mathematics in the Sciences, 
Inselstrasse 22, 04103 Leipzig, Germany. 
email: deepan.basu@mis.mpg.de}
\and
Artem Sapozhnikov
\thanks{University of Leipzig, Department of Mathematics, 
Augustusplatz 10, 04109 Leipzig, Germany.
email: artem.sapozhnikov@math.uni-leipzig.de}
}

\maketitle\unmarkedfntext{\textbf{Bibliographic note:} After this project was completed, we learned that an alternative proof of our main result was independently obtained 
by Newman, Tassion, and Wu, which was subsequently written up in \cite{NTW15}. In the same paper, they prove that 
in the critical Bernoulli percolation on slabs, the probabilities of open left-right crossings of rectangles with any given aspect ratio 
are uniformly smaller than $1$, see Remark~\ref{rem:rsw}.}

\begin{abstract}
We prove that in the critical Bernoulli percolation on two dimensional lattice slabs 
the probabilities of open left-right crossings of rectangles with any given aspect ratio 
are uniformly positive. 
\end{abstract}

\section{Introduction}

One of the main tools in the study of planar percolation models at criticality is the Russo-Seymour-Welsh (RSW) theorem. 
It states that the probability that an open path connects the left and right sides of a rectangle is 
bounded away from $0$ and $1$ by constants that only depend on the aspect ratio of the rectangle. 
This theorem was first proved for critical Bernoulli percolation on planar lattices in \cite{R78,SW78,R81,K82} 
and recently has been extended to some other planar models, perhaps most notably to the FK-percolation \cite{DCHN11,DCST15} and 
Voronoi percolation \cite{BR,T14}. 

In this note we consider the critical Bernoulli percolation on two dimensional slabs $\Z^2\times\{0,\ldots,k-1\}^{d-2}$. 
We prove that the probability of crossing a rectangle is bounded from below by a positive constant which only depends on the aspect ratio of the rectangle 
and the slab parameter $k$, but does not depend on the size of the rectangle. 
Our work is inspired by a recent paper of Duminil-Copin, Sidoravicius, and Tassion \cite{DCST} 
where they prove the absence of percolation at criticality for slabs and 
develop techniques for ``glueing'' open paths. 
Our proof is partly based on these new ideas. 

\section{Notation and result}

Fix an integer $k\geq 1$, and define the slab of width $k$ by  
\[
\slab = \Z^2\times\{0,\ldots,k-1\}^{d-2}.
\]
We consider Bernoulli bond percolation on $\slab$ with parameter $p\in[0,1]$, and denote the corresponding measure by $\P_p$. 
Let $p_c$ be the critical threshold for percolation, i.e., 
\[
p_c = \inf\left\{p:\P_p[\text{open connected component of $0$ in $\slab$ is infinite}]>0\right\},
\]
and define the measure $\P = \P_{p_c}$. 

\medskip

For a subset $A$ of vertices of $\Z^2$, let 
\[
\overline{A} = A\times\{0,\ldots,k-1\}^{d-2}.
\]
Define a rectangle and its left and right boundary regions by
\[
B(m,n) = \overline{[0,m)\times[0,n)}, \qquad
L(m,n) = \overline{\{0\}\times[0,n)}, \qquad
R(m,n) = \overline{\{m-1\}\times[0,n)}. 
\]
Consider the crossing event 
\[
\mathrm{LR}(m,n) = \left\{\text{$L(m,n)$ is connected to $R(m,n)$ by an open path in $B(m,n)$}\right\}
\]
and the crossing probability 
\[
p(m,n) = \P\left[\mathrm{LR}(m,n)\right].
\]
In this note we prove the following theorem. 
\begin{theorem}\label{thm:rsw}
For any $\rho\in(0,\infty)$, 
\begin{equation}\label{eq:rsw}
\liminf_{n\to\infty}p(\lfloor \rho n\rfloor,n) >0.
\end{equation}
\end{theorem}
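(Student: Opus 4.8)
The plan is to reduce \eqref{eq:rsw} to the single estimate $\liminf_{n\to\infty}p(2n,n)>0$ — a lower bound on crossing probabilities for rectangles of one fixed aspect ratio — and then to prove that estimate by an RSW-type argument. The reduction splits according to $\rho$. For $\rho\le1$ it is immediate from monotonicity of the crossing event in the domain: $\mathrm{LR}(m,m)\subseteq\mathrm{LR}(m,n)$ whenever $n\ge m$, and $\mathrm{LR}(2m,m)\subseteq\mathrm{LR}(m,m)$ since a left--right crossing of $B(2m,m)$ must reach the column $\{m-1\}$ while still lying in $B(m,m)$; hence $p(\lfloor\rho n\rfloor,n)\ge p(2\lfloor\rho n\rfloor,\lfloor\rho n\rfloor)$ and $\lfloor\rho n\rfloor\to\infty$. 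For $\rho>1$ one builds a left--right crossing of $B(\lfloor\rho n\rfloor,n)$ from a number of pieces depending only on $\rho$: left--right crossings of translates of $B(2n,n)$ arranged in a brick-wall pattern, whose successive overlaps are translates of $B(n,n)$, linked by top--bottom crossings of those overlapping squares. By the symmetry of $\slab$ under exchanging the two $\Z^2$-coordinates, the probability of a top--bottom crossing of $B(n,n)$ equals $p(n,n)\ge p(2n,n)$, so once $n$ is large the FKG inequality bounds the probability of the intersection of all these crossing events below by a positive constant depending only on $\rho$ and $k$.

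The one point in this reduction that is genuinely special to slabs is that, unlike in $\Z^2$, two open paths that ``ought to'' cross need not actually meet: a left--right crossing of a box and a top--bottom crossing of an overlapping square have $\Z^2$-projections that must intersect (a planar fact), so the two paths visit a common $\Z^2$-site and hence come within $\ell^\infty$-distance $k$, but they may occupy different coordinates in the thin directions. This is exactly where the ``glueing'' techniques of \cite{DCST} enter: conditionally on the configuration outside a box of size $O(k)$ around such a near-meeting, with probability at least some $\glue=\glue(k)>0$ — uniform in $n$ and in whatever has been revealed — a bounded local modification inside the box joins the two paths. Revealing, say, the lowest of the relevant crossings by an exploration, locating the $O(\rho)$ near-meetings, and performing these surgeries costs only a further multiplicative constant, so $p(\lfloor\rho n\rfloor,n)$ stays bounded below uniformly in $n$, which is \eqref{eq:rsw}.

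It remains to prove $\liminf_n p(2n,n)>0$, and this is the technical heart and, I expect, the main obstacle. It is an RSW-type statement, and since $\slab$ admits no planar duality the classical route (on $\Z^2$ at $p=\tfrac12$ a square is crossed with probability exactly $\tfrac12$, and one bootstraps from there) is unavailable. I would instead run a box-crossing renormalisation over dyadic scales in the spirit of Tassion's duality-free RSW technique \cite{T14}: conditioning on an extremal (say lowest) horizontal crossing of a box at scale $2^j$ and following it, one uses the reflection symmetries of $\slab$ to extend or reflect it across a somewhat larger box, producing a crossing at scale $2^{j+1}$ by \emph{re-using} a single crossing at scale $2^j$ rather than gluing together many independent copies of it — the latter being what dooms a naive FKG combination, since it multiplies crossing probabilities and yields a recursion in which the crossing probability at the next scale is bounded below only by a power $\ge2$ of the previous one, a recursion that iterates to $0$. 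Wherever this argument requires two open paths to be joined, the glueing lemma of \cite{DCST} again supplies the connection at a uniform cost, since $\slab$ is not planar. Finally, the degenerate possibility $p(2n,n)\to0$ along a subsequence must be ruled out; for this one invokes the absence of an infinite cluster at $p_c$ on slabs \cite{DCST}, arguing that if crossing probabilities of a fixed aspect ratio were small at some large scale, then a block argument assembled from the same glueing estimates would contradict $p_c$ being the critical parameter. The only inputs throughout are the FKG inequality, the symmetries of $\slab$, and the glueing estimates; no self-duality is used.
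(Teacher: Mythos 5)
Your reduction is sound and well organized: the monotonicity observations $\mathrm{LR}(m,m)\subseteq\mathrm{LR}(m,n)$ for $n\ge m$ and $\mathrm{LR}(2m,m)\subseteq\mathrm{LR}(m,m)$ do reduce the easy direction to the aspect-ratio-$2$ hard crossing, and the brick-wall construction plus FKG plus gluing for $\rho>1$ is in essence the paper's Proposition~\ref{pr:rho>1}. The observation that the difficulty specific to slabs is that two planar-crossing paths only have intersecting $\Z^2$-projections, not a common vertex, and that the DCST local-modification lemma supplies the glue at a uniform cost, is exactly right and matches Lemma~\ref{l:glueing} and Corollary~\ref{cor:glueing}.

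The gap is in the step you yourself flag as the heart: $\liminf_n p(2n,n)>0$. You outline a Tassion-style dyadic renormalisation but do not carry it out, and the part you do spell out, ruling out the ``degenerate possibility $p(2n,n)\to0$ along a subsequence'' by invoking absence of an infinite cluster at $p_c$, is backwards: a vanishing hard-direction crossing probability is entirely consistent with no infinite cluster (indeed it would suggest it), so this provides no contradiction. Ruling out that degeneracy is not a peripheral sanity check; it \emph{is} the theorem. The Kesten-style recursion that the paper uses in Proposition~\ref{pr:rho<1} to kill small crossing probabilities only closes for the easy direction, since a crossing of a wide box splits at a middle column into two crossings of \emph{taller} sub-boxes, which pushes you back to the easy aspect ratio rather than iterating within the hard one. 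The paper therefore does not try to prove $\liminf p(2n,n)>0$ unconditionally; it proves the easy direction directly (Proposition~\ref{pr:rho<1}), and the real content is the short-to-long inequality $p(44n,43n)\ge c\cdot p(43n,44n)^C$ (Proposition~\ref{pr:shorttolong}), established by a Bollob\'as--Riordan-style case analysis restricting the shapes of crossings, culminating in a reflection/symmetrization of the cluster of a boundary segment — an argument quite different in mechanism from a dyadic Tassion renormalisation. Your instinct that a Tassion route can be made to work is vindicated at a high level (the bibliographic note records that Newman, Tassion and Wu independently proved this and more by such means), but as written your proposal replaces the one genuinely hard step with a sketch that, where it commits to a concrete argument, does not work.
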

\begin{proof}
As some of the ``glueing'' ideas used in the proof are unnecessary for $k=1$ and easier for $k\geq 2$, 
we {\it assume from now on without further mentioning that $k\geq2$}.
The theorem is proved in $3$ steps:
\begin{itemize}\itemsep0pt
\item
The result holds for all $\rho\in(0,1)$. This is well known. We give a proof in Proposition~\ref{pr:rho<1}.
\item
If the result holds for some $\rho>1$, then it holds for all $\rho>1$. 
This is a well known fact in planar percolation. We prove the slab version in Proposition~\ref{pr:rho>1} using the planar approach together with 
a novel technique for glueing paths from \cite{DCST} (see Lemma~\ref{l:glueing}).
\item
There exist $c>0$ and $C<\infty$ such that for all $n\geq 1$, $p(44n,43n)\geq c\cdot p(43n,44n)^C$. 
This inequality is the main contribution of this paper. We prove it in Proposition~\ref{pr:shorttolong}.
\end{itemize}
\end{proof}
\begin{remark}\label{rem:rsw}
For $\rho<1$, the result of Theorem~\ref{thm:rsw} holds in any dimension $d\geq 2$. 
We believe that it also holds for $\rho\geq1$, but do not know a proof. 
Our method unfortunately crucially relies on planarity of slabs. 
If dimension is sufficently high, it is proved in \cite{A97} that the crossing probabilities tend to $1$ as $n\to\infty$. 
We believe that for percolation on slabs (and in low dimensions) for every $\rho>0$, $\limsup_{n\to\infty}p(\lfloor \rho n\rfloor,n) <1$, 
but do not have a proof yet.\footnotemark[1]\footnotetext[1]{For slabs, this was recently proved in \cite{NTW15}.}
\end{remark}

\medskip

Earlier we defined $\overline{A}$ as a subset of $\slab$ for each $A\subset\Z^2$. 
In the proofs we will often use the same notation $\overline{A}$ for $A\subset\slab$ meaning 
\[
\overline A = \{z=(z_1,\ldots,z_d)\in\slab~:~(z_1,z_2,x_3,\ldots,x_d)\in A\text{ for some $x_3,\ldots,x_d$ }\}.
\]
This way, for each $A\subset\Z^2$, $\overline{A}$ defined earlier is the same as $\overline{A\times\{0\}^{d-2}}$ defined just above.

\section{Crossings of narrow rectangles}

The following proposition is an adaptation to slabs of a well known fact about the probabilities of crossing hypercubes of fixed aspect ratio in the easy direction. 
\begin{proposition}\label{pr:rho<1}
For any $\rho\in(0,1)$, \eqref{eq:rsw} holds.
\end{proposition}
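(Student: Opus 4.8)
The plan is to prove this by the standard "gluing crossings of narrow rectangles" argument, adapted from $\Z^2$ to the slab $\slab$. Since $\rho < 1$, the rectangle $B(\lfloor \rho n\rfloor, n)$ is tall and thin, so a left-right crossing goes the ``easy'' way: intuitively one only needs to traverse a bounded horizontal distance while having plenty of vertical room to maneuver. First I would fix an integer $\ell$ with $\ell > \rho$ (so $\ell \geq 1$; in fact taking $\ell$ to be any fixed integer $\geq 2$ is safe), and observe that if $p(\ell n, n)$ were bounded below then a fortiori $p(\lfloor \rho n\rfloor, n) \geq p(\ell n, n)$ is too, because $B(\lfloor\rho n\rfloor,n)$ contains $B(\lfloor\rho n\rfloor,n)$ — wait, I have the inclusion backwards. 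The correct reduction is: a left-right crossing of the \emph{wide} rectangle $B(\ell n, n)$ restricts to give crossings of sub-rectangles, but what we actually want is a lower bound on crossing the \emph{narrow} rectangle. So instead I would directly build a left-right crossing of $B(\lfloor\rho n\rfloor, n)$ out of vertical crossings of squares.

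Concretely, here is the construction. Choose a fixed integer $q$ large enough that $q > 1/\rho$ roughly, so that $n \geq q\lfloor \rho n\rfloor$ for large $n$ — i.e., the height exceeds $q$ times the width. Tile a sub-rectangle of $B(\lfloor\rho n\rfloor,n)$ by a vertical stack of roughly $q$ squares $S_1,\dots,S_q$, each of side length $\approx \lfloor\rho n\rfloor$, where $S_j = \overline{[0,w)\times[(j-1)w, jw)}$ with $w = \lfloor\rho n\rfloor$. In each square $S_j$ consider the event $T_j$ that there is an open \emph{top-bottom} crossing of $S_j$ (which in our coordinates is a crossing in the ``hard'' direction of a square, but for a square hard and easy coincide up to relabeling axes). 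On the intersection $T_1 \cap \cdots \cap T_q$, consecutive crossings meet: the bottom of the crossing of $S_j$ lies on the line $\overline{[0,w)\times\{(j-1)w\}}$ and the top of the crossing of $S_{j-1}$ lies on the same line, and since both are connected horizontal-spanning paths in adjacent squares sharing that full edge, by planarity within each fixed ``sheet'' (or more robustly by a topological crossing argument valid in the slab, since both paths span the same width-$w$ strip) they must intersect, or can be glued with a bounded-cost local modification. Chaining them produces an open path from the bottom side $\overline{[0,w)\times\{0\}}$ to the top side $\overline{[0,w)\times\{qw\}}$ — but that is still a \emph{vertical} crossing, not the horizontal left-right crossing we want.

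Let me correct the orientation once more, since this is the crux. We want $L(m,n)$ connected to $R(m,n)$ where $m = \lfloor\rho n\rfloor \ll n$: that is a crossing of horizontal extent $m$, using vertical room $n$. So I should stack \emph{horizontal} strips: for $m = \lfloor\rho n\rfloor$ and $q \approx 1/\rho$, let $R_j = \overline{[0,m)\times[(j-1)m, jm)}$ for $j = 1,\dots,q$, each an $m\times m$ square sitting inside $B(m,n)$. In square $R_j$ let $H_j$ be the event of an open left-right crossing of $R_j$ — i.e.\ a crossing of the square $\overline{[0,m)\times[0,m)}$ in the left-right direction, shifted vertically. Each $H_j$ is a crossing of an $m\times m$ square of fixed (unit) aspect ratio, so its probability is bounded below by the square-crossing probability $p(m,m)$, which I claim is $\geq \delta > 0$ uniformly in $n$. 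On $H_1 \cap \cdots \cap H_q$, \emph{any one} of these crossings already joins $L(m,n)$ to $R(m,n)$ inside $B(m,n)$. So in fact $p(m,n) \geq 1 - (1-\delta)^q \geq \delta$ by a single application of the FKG-type bound (or even just monotonicity: $p(m,n)\geq p(m,m)$ since $B(m,n)\supset B(m,m)$ and a crossing of $B(m,m)$ is a crossing of $B(m,n)$). Thus the whole statement reduces to: \emph{the crossing probability of an $m\times m$ square is bounded below uniformly in $m$}.

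That last reduction is where the real work is, and it is where I expect the main obstacle. On $\Z^2$ one proves $p(m,m) \geq$ const via a reflection/square-root trick: the complementary event (no horizontal crossing) forces, by planar duality, a vertical dual crossing, and by symmetry a horizontal crossing probability below $1/2$ in one orientation combined with the square-root trick gives a lower bound; alternatively one uses the self-duality directly to get $p(m,m)\geq 1/2$ exactly on $\Z^2$. On a slab there is no exact planar duality, so I would instead argue as follows: the slab $\slab$ contains the plane $\Z^2\times\{0\}^{d-2}$ as a subgraph, the induced bond percolation on that sub-plane is critical Bernoulli percolation on $\Z^2$ at some parameter $p_c(\slab) \leq p_c(\Z^2)$... no --- $p_c(\slab) \le p_c(\Z^2)$ fails too since adding edges only helps percolation, so $p_c(\slab)\le p_c(\Z^2)$, meaning at $p = p_c(\slab)$ the sub-plane $\Z^2$ is \emph{subcritical or critical}. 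Hmm, that gives crossing probabilities of the sub-plane bounded \emph{above}, not below. So the honest route, and the one I would commit to, is a direct renormalization/FKG argument: by FKG, $p(m,m)$ times (a bounded number of crossings of neighboring translated squares, arranged in a ``circuit'' around an annulus) gives a bound on an annulus-crossing probability, and conversely if square crossings were not bounded below then by the square-root trick applied to the four rotations of the square one of the two crossing directions would be degenerate, contradicting the fact (provable by a separate easy argument, or cited as ``well known'') that a square is crossed in \emph{some} direction with probability $1$. I will carry out the square-root trick: letting $a_m = p(m,m)$, the event ``$B(m,m)$ is not crossed left-right'' and ``not crossed bottom-top'' are both increasing complements; their union has probability $\geq 1 - a_m$ in the worst case, and since the square is always crossed in at least one of the two directions (a topological fact on $\slab$, since $\slab$ deformation-retracts onto $\Z^2$ so Jordan-curve-type arguments apply to the occupied/vacant dichotomy), we get $a_m + a_m' \geq 1$ where $a_m'$ is the bottom-top crossing probability; these need not be equal on a slab, but replacing $B(m,m)$ by $B(m, m+1)$ and $B(m+1,m)$ and using monotonicity and the square-root trick $\max(a_m, a_m') \geq 1 - \sqrt{1 - \max(\cdot)} $ forces $\max(a_m,a_m') \geq 1/2$, and then a further bounded-gluing step (Lemma~\ref{l:glueing}, gluing a horizontal and vertical crossing of overlapping rectangles) transfers the lower bound from whichever direction is good to both, giving $a_m \geq c$ uniformly. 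The main obstacle throughout is replacing exact planar self-duality with the approximate, gluing-based substitutes available on slabs; everything else is routine bookkeeping with FKG and translation invariance.
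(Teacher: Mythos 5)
Your reduction to a uniform lower bound on $p(m,m)$ (via $B(m,m)\subset B(m,n)$ for $n\ge m$, hence $p(m,n)\ge p(m,m)$) is fine. But the argument you give for the square case does not work, and the point where it breaks is precisely the place you flag as ``where the real work is.'' The claim that ``a square is crossed in \emph{some} direction with probability $1$'' because ``$\slab$ deformation-retracts onto $\Z^2$ so Jordan-curve-type arguments apply to the occupied/vacant dichotomy'' is false. A deformation retraction of the ambient continuous space does not induce any combinatorial planar duality on the slab lattice; there is no matching dual graph, and a configuration with no open left-right crossing of $B(m,m)$ need not contain an open (nor even a closed) top-bottom crossing. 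Even on $\Z^2$, the correct statement mixes an open crossing in one direction with a \emph{closed dual} crossing in the other, and turning that into $p(m,m)\ge 1/2$ uses self-duality of the lattice at $p_c=1/2$; none of this transfers to slabs. Consequently the square-root trick as you set it up has nothing to bite on, and the ``further bounded-gluing step'' cannot rescue a lower bound that was never established for either direction. The gluing technology of Lemma~\ref{l:glueing} lets one combine crossings that one already knows exist with positive probability; it does not produce the initial positive probability.

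The paper's proof avoids duality entirely and is a different argument: one shows it suffices to bound $p(n,nL)$ below for each fixed integer $L$, then uses the classical recursive inequality $\P_p[\mathrm{LR}(2n,2nL)]\le\bigl((3L-1)\P_p[\mathrm{LR}(n,nL)]\bigr)^2$ (which holds on slabs because it only needs union bounds and independence, not planarity). If $\liminf_n p(n,nL)=0$, continuity of the crossing probability in $p$ gives some $p>p_c$ at which the base is below $(3L-1)^{-2}/2$, and iterating forces $\P_p[\mathrm{LR}(2^s n,2^s nL)]\to 0$. This contradicts the standard fact that for $p>p_c$ crossing probabilities of boxes of fixed aspect ratio tend to $1$. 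Your proposal never invokes this supercriticality input, and without either it or genuine planar duality there is no way to close the argument; that is the missing idea.
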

\begin{proof}
It suffices to prove that for all $L\in\N$, $\liminf_{n\to\infty}p(n,nL)>0$. 
The classical recursive inequality (applied to slabs) states that for every $p\in[0,1]$, $L,n\geq 1$, 
\[
\P_p\left[\mathrm{LR}(2n,2nL)\right]\leq \left((3L-1)\cdot \P_p\left[\mathrm{LR}(n,nL)\right]\right)^2,
\]
which implies that for all $s\geq 0$, 
\[
\P_p\left[\mathrm{LR}(2^s n,2^s nL)\right]\leq \left((3L-1)^2\cdot \P_p\left[\mathrm{LR}(n,nL)\right]\right)^{2^s}. 
\]
Fix $L\in\N$. If $\liminf_{n\to\infty}p(n,nL)=0$, there exists $n\in\N$ such that $p(n,nL)<\frac{1}{2(3L-1)^2}$. 
Since the crossing probability $\P_p\left[\mathrm{LR}(n,nL)\right]$ is continuous in $p$, 
there also exists $p>p_c$ such that $\P_p\left[\mathrm{LR}(n,nL)\right]< \frac{1}{2(3L-1)^2}$. 
For this choice of parameters, $\lim_{s\to\infty}\P_p\left[\mathrm{LR}(2^s n,2^s nL)\right] = 0$, which is impossible, 
since for every $p>p_c$, this limit equals to $1$ (see, e.g., \cite{G99}). Thus, $\liminf_{n\to\infty}p(n,nL)>0$.
\end{proof}

\section{Glueing}

In this section we recall a new technique for glueing paths from \cite{DCST}. 
It will be used to adapt some arguments from planar percolation to slabs. 
We begin with a classical combinatorial lemma about local modifications, see, e.g., \cite[Lemma~7]{DCST}. 
\begin{lemma}\label{l:AB}
Let $n\geq 1$ and $p\in(0,1)$. Let $A,B\subseteq \{0,1\}^n$ and $\mathbf P_p$ a product measure on $\{0,1\}^n$ with parameter $p$, i.e., 
\[
\mathbf P_p[\omega] = \prod_{i=1}^np^{\omega_i}(1-p)^{1-\omega_i},\quad \omega\in\{0,1\}^n.
\]
If there exists a map $f:A\to B$ such that for every $\omega'\in B$, there exists a set $S\subseteq\{1,\ldots, n\}$ such that $|S|\leq s$ and 
\[
\omega_i = \omega_i',\quad \text{for all }i\notin S\text{ and }\omega\in f^{-1}(\omega'),
\]
then 
\[
\mathbf P_p[A] \leq \left(\frac{2}{\min(p,1-p)}\right)^s
\cdot \mathbf P_p[B].
\]
\end{lemma}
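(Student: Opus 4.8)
The plan is to decompose $A$ according to the fibers of $f$ and sum the contributions. First I would write $\mathbf P_p[A] = \sum_{\omega'\in B}\mathbf P_p[f^{-1}(\omega')]$, using that the sets $f^{-1}(\omega')$, $\omega'\in B$, are disjoint and cover $A$. Thus it suffices to bound $\mathbf P_p[f^{-1}(\omega')]$ by $\left(\frac{2}{\min(p,1-p)}\right)^s\cdot\mathbf P_p[\omega']$ for each fixed $\omega'\in B$, and then sum over $\omega'\in B$ to recover $\mathbf P_p[B] = \sum_{\omega'\in B}\mathbf P_p[\omega']$ on the right-hand side.

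So fix $\omega'\in B$ and let $S = S(\omega')$ be the set provided by the hypothesis, with $|S|\le s$. Every $\omega\in f^{-1}(\omega')$ agrees with $\omega'$ outside $S$, so $f^{-1}(\omega')$ is contained in the ``cylinder'' $Q = \{\omega\in\{0,1\}^n : \omega_i=\omega_i'\text{ for all }i\notin S\}$, which has exactly $2^{|S|}\le 2^s$ elements. For any single configuration $\omega\in Q$, the ratio $\mathbf P_p[\omega]/\mathbf P_p[\omega']$ equals $\prod_{i\in S} \big(p/(1-p)\big)^{\pm 1}$ depending on how the coordinates in $S$ differ; each such factor is at most $\max\!\big(\tfrac{p}{1-p},\tfrac{1-p}{p}\big) = \tfrac{\max(p,1-p)}{\min(p,1-p)} \le \tfrac{1}{\min(p,1-p)}$. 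Hence $\mathbf P_p[\omega]\le \left(\tfrac{1}{\min(p,1-p)}\right)^{|S|}\mathbf P_p[\omega'] \le \left(\tfrac{1}{\min(p,1-p)}\right)^{s}\mathbf P_p[\omega']$. Summing this over the at most $2^s$ configurations in $f^{-1}(\omega')\subseteq Q$ gives $\mathbf P_p[f^{-1}(\omega')]\le 2^s\left(\tfrac{1}{\min(p,1-p)}\right)^{s}\mathbf P_p[\omega'] = \left(\tfrac{2}{\min(p,1-p)}\right)^{s}\mathbf P_p[\omega']$, as required.

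Summing the last bound over $\omega'\in B$ finishes the proof. There is no real obstacle here: the only mild point to get right is the bookkeeping with the fibers (they partition $A$, not $B$) and the observation that bounding a whole fiber costs an extra factor $2^s$ beyond the per-configuration Radon--Nikodym bound $\left(\min(p,1-p)\right)^{-s}$, since the fiber can contain up to $2^s$ configurations differing from $\omega'$ on $S$. One should also note $\min(p,1-p)>0$ since $p\in(0,1)$, so all the ratios are well defined.
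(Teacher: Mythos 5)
Your proof is correct. The paper itself does not include a proof of Lemma~\ref{l:AB}; it simply cites \cite[Lemma~7]{DCST}, so there is no in-paper argument to compare against. Your decomposition over fibers of $f$, the containment $f^{-1}(\omega')\subseteq Q$, the per-configuration bound $\mathbf P_p[\omega]\le\bigl(\min(p,1-p)\bigr)^{-s}\mathbf P_p[\omega']$, and the $2^s$ count of $Q$ are exactly the ingredients of the standard proof, and they assemble to the claimed inequality. One small remark: you can skip the factor $2^s$ entirely by observing that $\mathbf P_p[f^{-1}(\omega')]\le\mathbf P_p[Q]=\mathbf P_p[\omega']/\prod_{i\in S}p^{\omega_i'}(1-p)^{1-\omega_i'}\le\bigl(\min(p,1-p)\bigr)^{-|S|}\mathbf P_p[\omega']$, which gives the stronger bound $\mathbf P_p[A]\le\bigl(\min(p,1-p)\bigr)^{-s}\mathbf P_p[B]$; the lemma as stated carries the extra $2^s$ presumably to match the cited reference, and your argument does produce exactly that stated constant. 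Also, strictly speaking, your ratio $\prod_{i\in S}(p/(1-p))^{\pm1}$ should allow exponent $0$ on coordinates in $S$ where $\omega$ and $\omega'$ happen to agree, but this only strengthens the bound and does not affect the conclusion.
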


\medskip

We will often apply Lemma~\ref{l:AB} in the case $p=p_c$ and $s$ being not bigger than the number of edges in $\overline{[-3,3]^2}$. 
Therefore, we define 
\[
\matching = \left(\frac{2}{\min(p,1-p)}\right)^{d\cdot 7^2\cdot k^{d-2}},\qquad
\glue = \frac{1}{1+\matching}.
\]

The following lemma is essentially proven in \cite[Lemma~6]{DCST}. 
\begin{lemma}\label{l:glueing}
Let $X_1$, $X_2$, $Y_1$, and $Y_2$ be disjoint connected subsets of the interior vertex boundary of $[0,m)\times[0,n)$ 
arranged in a counter-clockwise order. Then 
\[
\P\left[\text{$\overline{X_1}$ is connected to $\overline{X_2}$ in $B(m,n)$}\right]
\geq \glue\cdot 
\P\left[\begin{array}{c}\text{$\overline{X_1}$ is connected to $\overline{Y_1}$ in $B(m,n),$}\\
\text{$\overline{X_2}$ is connected to $\overline{Y_2}$ in $B(m,n)$}\end{array}\right]
\]
\end{lemma}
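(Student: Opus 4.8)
The plan is to condition on the event on the right-hand side and then locally modify the configuration to produce an $\overline{X_1}$--$\overline{X_2}$ connection, invoking Lemma~\ref{l:AB} to control the cost of the modification. So first I would assume that both $\overline{X_1}\leftrightarrow\overline{Y_1}$ and $\overline{X_2}\leftrightarrow\overline{Y_2}$ occur inside $B(m,n)$. Let $\gamma_1$ be an open path realizing the first connection and $\gamma_2$ one realizing the second. Because $X_1,Y_1,X_2,Y_2$ sit on the interior vertex boundary of the rectangle in counter-clockwise order, the path $\gamma_1$ (joining $X_1$ to $Y_1$) and the path $\gamma_2$ (joining $X_2$ to $Y_2$) must \emph{cross}: topologically, $Y_1$ is ``between'' $X_1$ and $X_2$ along the boundary on one side, and $Y_2$ is between them on the other side, so $\gamma_2$ separates $X_1$ from $Y_1$ within the rectangle unless $\gamma_1$ and $\gamma_2$ share a vertex. (This is the standard planar topological input; on a slab I would run the argument in the projection to $\Z^2$, using that $\overline{\gamma_1}$ and $\overline{\gamma_2}$ project to connected subsets of $\Z^2$ that must intersect, which is exactly the kind of reasoning the glueing technique of \cite{DCST} is designed to make rigorous.)

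Once $\gamma_1$ and $\gamma_2$ are known to meet at some vertex $v$ (in the slab, or whose $\Z^2$-projection is a common projected vertex), I get an open connection from $\overline{X_1}$ to $v$ and from $v$ to $\overline{X_2}$; if they genuinely share a slab-vertex this already gives $\overline{X_1}\leftrightarrow\overline{X_2}$ for free with $\glue$ replaced by $1$. The only difficulty is when the projected paths cross but the actual slab-paths pass at different heights in the $\{0,\ldots,k-1\}^{d-2}$ directions near that projected crossing point. In that case I would open all edges in a small box $\overline{v+[-3,3]^2}$ around the crossing location: this box contains at most $d\cdot 7^2\cdot k^{d-2}$ edges, and forcing them open connects the two path-segments through the fiber over the crossing point, yielding $\overline{X_1}\leftrightarrow\overline{X_2}$ in $B(m,n)$.

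To turn this into the claimed inequality I would set up Lemma~\ref{l:AB} with $A$ the event on the right-hand side, $B$ the event on the left, $n$ the number of edges in $B(m,n)$, $p=p_c$, and $s = d\cdot 7^2\cdot k^{d-2}$. The map $f$ sends a configuration $\omega\in A$ to the configuration $f(\omega)$ obtained by selecting (measurably, e.g.\ via a fixed deterministic rule) a crossing point of the two realizing paths, and opening all edges in the $7\times 7$ box around it; outside that box $f(\omega)$ agrees with $\omega$, so the set $S$ in Lemma~\ref{l:AB} has size at most $s$ for every target configuration. Lemma~\ref{l:AB} then gives $\P[A]\le \matching\cdot\P[B]$, i.e.\ $\P[B]\ge \matching^{-1}\P[A]\ge (1+\matching)^{-1}\P[A] = \glue\cdot\P[A]$, which is the assertion. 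The main obstacle is the topological step: making precise, on a slab rather than in the plane, why two open paths with the stated boundary positions must have intersecting $\Z^2$-projections, and doing the bookkeeping so that the ``repair box'' is always of bounded size $\le d\cdot 7^2\cdot k^{d-2}$ regardless of $m,n$ — this is precisely the content borrowed from \cite[Lemma~6]{DCST}, and I would lean on that argument rather than reprove it from scratch.
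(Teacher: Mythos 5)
Your overall strategy — local modification plus Lemma~\ref{l:AB}, with the planar topological fact that the two realizing paths must have intersecting projections — matches the paper's. But you have misidentified where the real work lies, and as written the proposal does not satisfy the hypotheses of Lemma~\ref{l:AB}.

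The hypothesis of Lemma~\ref{l:AB} is not ``each source configuration is modified on a small set''; it is the much stronger requirement that \emph{for each target} $\omega'$ there is a single set $S$ with $|S|\leq s$ such that \emph{every} preimage of $\omega'$ agrees with $\omega'$ off $S$. Your map ``open the $7\times 7$ box around a chosen crossing point'' does not guarantee this: two configurations $\omega,\tilde\omega$ whose chosen crossing points are far apart can map to the same $\omega'$ (e.g.\ if $\tilde\omega$ already had the box around $v(\omega)$ fully open), and then no single $S$ of the stated size covers both preimages. In other words, the modification location must be \emph{recoverable from the target}, and your ``fixed deterministic rule'' chooses it from the source. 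Repairing this is precisely what the DCST construction is for: the paper defines $f$ on $E_1\cap E_2\cap X^c$, modifies around a point $z$ on the deterministic \emph{minimal} open path $\gamma_{\min}$, and arranges that in $f(\omega)$ the point $z$ is the \emph{unique} vertex on $\gamma_{\min}(f(\omega))$ connected to $\overline{X_2}$ off that path — this uniqueness is exactly what lets one read $z$, hence $S$, off of $\omega'$. The topological ``projections must cross'' step that you flag as the main obstacle is in fact the easy planar part; the preimage control is the genuine difficulty, and your write-up passes over it.

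Two smaller points. First, the paper restricts the map's domain to $E_1\cap E_2\cap X^c$ (not all of $E_1\cap E_2$), then writes $\P[E_1\cap E_2]\leq\P[X]+\matching\P[X]$; that is where the constant $\glue=(1+\matching)^{-1}$ comes from, whereas your direct map would formally give the stronger $\matching^{-1}$. That discrepancy is itself a hint that your map, if it worked, would be proving something the authors could not: the restriction to $X^c$ is also what makes the uniqueness/recoverability argument go through, since on $X$ the modification would have to be the identity and the target would carry no marker of where to look. Second, the target should be $E_1\cap X$ rather than $X$ alone, as the DCST construction keeps the $\overline{X_1}$--$\overline{Y_1}$ connection intact; this is harmless for the conclusion but matters for the bookkeeping.
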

\begin{proof}
Let  
\begin{align*}
E_i &= \{\text{$\overline{X_i}$ is connected to $\overline{Y_i}$ in $B(m,n)$}\},\\ 
X &= \{\text{$\overline{X_1}$ is connected to $\overline{X_2}$ in $B(m,n)$}\},
\end{align*}
It suffices to prove that $\P[E_1\cap E_2\cap X^c] \leq \matching\cdot \P[X]$. 
We will use Lemma~\ref{l:AB}, where a suitable function $f:E_1\cap E_2\cap X^c\to X$ 
will be constructed using ideas from \cite[Lemma~6]{DCST}. 

\medskip

We fix an order on edges $\{e:|e| = 1\}$ in $\Z^d$ and enumerate all the vertices of $\slab$ arbitrarily. 
Define an order $<$ on self-avoiding paths from $\overline{X_1}$ to $\overline{Y_1}$ in $B(m,n)$ as follows. If $\gamma = (\gamma_0,\ldots, \gamma_n)$ and 
$\gamma' = (\gamma_0',\ldots,\gamma_{n'}')$ are two such paths, then $\gamma < \gamma'$ if
\begin{itemize}\itemsep0pt
\item
$\gamma_0$ has a smaller number than $\gamma_0'$, or 
\item
$n<n'$ and $\gamma = (\gamma_0',\ldots, \gamma_n')$, or
\item
there exists $k<\min(n,n')$ such that $(\gamma_0,\ldots,\gamma_k) = (\gamma_0',\ldots,\gamma_k')$, and 
the edge $\{0,\gamma_{k+1} - \gamma_k\}$ is smaller than $\{0,\gamma_{k+1}' - \gamma_k'\}$.
\end{itemize}
For $\omega\in E_1$, let $\gamma_{\min}(\omega)$ be the minimal open self-avoiding path from $\overline{X_1}$ to $\overline{Y_1}$ 
for the above defined order. Exactly the same construction as in the proof of \cite[Lemma~6, Fact~2]{DCST} gives a function 
$f:E_1\cap E_2\cap X^c\to E_1\cap X$ such that for every $\omega\in E_1\cap E_2\cap X^c$, there exists a $z\in \gamma_{\min}(\omega)$ such that 
\begin{itemize}\itemsep0pt
\item
$z\in\gamma_{\min}(f(\omega))$, 
\item
$z$ is a unique vertex on $\gamma_{\min}(f(\omega))$ connected to $\overline{X_2}$ by an open path that does not use edges of $\gamma_{\min}(f(\omega))$,
\item
$\omega_e = f(\omega)_e$ for all $e\notin \overline{z+ [-3,3]^2\times\{0\}^{d-2}}$.
\end{itemize}
Roughly speaking, one chooses $z\in\gamma_{\min}(\omega)$ such that $\overline{z}$ is connected to $\overline{X_2}$ and 
modifies the configuration in $\overline{z+ [-3,3]^2\times\{0\}^{d-2}}$ so that in the new configuration $z$ is connected 
to $\overline{X_1}$, $\overline{Y_1}$, and $\overline{X_2}$ by open paths having only vertex $z$ in common and so that the minimal open path from 
$\overline{X_1}$ to $\overline{Y_1}$ still passes through $z$. 
This tricky construction is very carefully explained in the proof of \cite[Lemma~6, Fact~2]{DCST}, therefore we do not repeat it here. 

The function $f$ satisfies the conditions of Lemma~\ref{l:AB} with $s$ being the number of edges in $\overline{[-3,3]^2}$. 
Thus, $\P[E_1\cap E_2\cap X^c] \leq \matching \cdot \P[E_1\cap X]\leq \matching \cdot \P[X]$. 
The proof of the lemma is complete. 
\end{proof}

\begin{corollary}\label{cor:glueing}
By Lemma~\ref{l:glueing} and the FKG inequality, 
\begin{multline*}
\P\left[\text{$\overline{X_1}$ is connected to $\overline{X_2}$ in $B(m,n)$}\right]\\
\geq \glue\cdot 
\P\left[\text{$\overline{X_1}$ is connected to $\overline{Y_1}$ in $B(m,n)$}\right]\cdot
\P\left[\text{$\overline{X_2}$ is connected to $\overline{Y_2}$ in $B(m,n)$}\right].
\end{multline*}
\end{corollary}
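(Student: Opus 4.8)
The plan is to run the conclusion of Lemma~\ref{l:glueing} once, and then bound the joint probability on its right-hand side from below by the product of the two marginal probabilities using the FKG inequality. Concretely, set
\[
E_1 = \{\text{$\overline{X_1}$ is connected to $\overline{Y_1}$ in $B(m,n)$}\},\qquad
E_2 = \{\text{$\overline{X_2}$ is connected to $\overline{Y_2}$ in $B(m,n)$}\}.
\]
Both $E_1$ and $E_2$ are increasing events (adding open edges can only create, never destroy, open connections), so the FKG inequality for the product measure $\P = \P_{p_c}$ gives $\P[E_1\cap E_2]\geq \P[E_1]\cdot\P[E_2]$. Combining this with Lemma~\ref{l:glueing}, which states that $\P[\text{$\overline{X_1}\leftrightarrow\overline{X_2}$ in $B(m,n)$}]\geq \glue\cdot\P[E_1\cap E_2]$, yields
\[
\P[\text{$\overline{X_1}$ connected to $\overline{X_2}$ in $B(m,n)$}]\;\geq\;\glue\cdot\P[E_1]\cdot\P[E_2],
\]
which is exactly the claimed inequality.

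In carrying this out I would first note that the four sets $X_1, X_2, Y_1, Y_2$ appearing in the statement are assumed to satisfy the hypotheses of Lemma~\ref{l:glueing} (disjoint connected subsets of the interior vertex boundary of $[0,m)\times[0,n)$, in counter-clockwise order), so the lemma applies verbatim and no additional geometric verification is needed. The only genuinely substantive input is Lemma~\ref{l:glueing} itself; everything else is the standard observation that connectivity events are increasing together with FKG. Since the excerpt permits me to assume Lemma~\ref{l:glueing}, there is essentially no obstacle here — the ``hard part,'' the combinatorial glueing construction via Lemma~\ref{l:AB}, has already been done in the proof of the lemma.

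One small point worth stating explicitly is that FKG applies to $\P_{p_c}$ as a Bernoulli product measure on the edges of $\slab$, and increasing events in the present context are those preserved under adding open edges; the events $E_1$ and $E_2$ qualify because a path witnessing a connection in a configuration $\omega$ still witnesses it in any $\omega'\geq\omega$. Thus the two-line argument above is complete, and this is really a corollary in the literal sense: it is an immediate consequence of the preceding lemma plus a classical correlation inequality.
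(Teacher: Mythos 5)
Your argument is correct and is exactly the one the paper intends: the corollary is stated as an immediate consequence of Lemma~\ref{l:glueing} followed by FKG applied to the two increasing connection events, which is precisely what you wrote. Nothing to add.
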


\section{Crossings of wide rectangles}
\begin{proposition}\label{pr:rho>1}
If \eqref{eq:rsw} holds for some $\rho>1$, then it holds for all $\rho>1$. 
\end{proposition}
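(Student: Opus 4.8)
The plan is to follow the classical planar percolation strategy for amplifying the aspect ratio of crossed rectangles, but with the key "glueing" step replaced by the slab-compatible Corollary~\ref{cor:glueing}. Suppose \eqref{eq:rsw} holds for some fixed $\rho_0>1$; we want it for an arbitrary $\rho>1$, and since enlarging the aspect ratio only makes crossing harder it suffices to treat $\rho>\rho_0$, in fact to show that crossing probabilities of rectangles of aspect ratio $2\rho_0$ (say) are bounded below, and then iterate a bounded number of times to pass from $\rho_0$ to any prescribed $\rho$. The standard trick is: a long rectangle of length $\approx 2\rho_0 n$ and height $n$ can be crossed left--right if one chains together two overlapping copies of $\rho_0$-rectangles together with a vertical crossing of the overlap region. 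More precisely, place two horizontal rectangles of shape $(\lfloor\rho_0 n\rfloor,n)$ so that they overlap in a square-ish block of width comparable to $n$, and in that overlap block demand a vertical (top--bottom) crossing; the union of the left horizontal crossing, the right horizontal crossing, and the vertical crossing of the overlap forces a left--right crossing of the big rectangle. The vertical crossing of a square-ish rectangle is itself controlled by Proposition~\ref{pr:rho<1} (the easy direction), so all three ingredient events have probability bounded below by a positive constant depending only on $\rho_0$ and $k$.

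The step where slab geometry intervenes is combining these three crossings into a single connected cluster. In the planar setting one uses that two crossings in a common strip must intersect; on a slab, a horizontal crossing and a vertical crossing living in the same planar window need not meet as subsets of $\slab$, because they may sit in different "sheets" $\Z^2\times\{x_3,\dots,x_d\}$. This is exactly what Lemma~\ref{l:glueing} and Corollary~\ref{cor:glueing} are designed to handle: we take $X_1,X_2,Y_1,Y_2$ to be the four relevant boundary pieces (for instance, the left side of the big rectangle, the right side, and two opposite sides of the overlap block, suitably labelled in counter-clockwise order) and apply the corollary to convert a product of two "connected to an intermediate piece" events into a single "$\overline{X_1}$ connected to $\overline{X_2}$" event, paying only the fixed multiplicative constant $\glue$ each time we glue. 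Concretely, I would first glue the left horizontal crossing to the vertical crossing of the overlap (producing a cluster joining the left side of the big rectangle to the overlap block), then glue that to the right horizontal crossing; each application of Corollary~\ref{cor:glueing} costs a factor $\glue$ and uses FKG to multiply the surviving crossing probabilities, so after two glueings one obtains
\[
p(\lfloor 2\rho_0 n\rfloor, n)\;\geq\; \glue^2\cdot p(\lfloor \rho_0 n\rfloor, n)^2\cdot \bigl(\text{vertical-crossing probability of the overlap block}\bigr),
\]
with every factor on the right bounded below uniformly in $n$ by hypothesis and by Proposition~\ref{pr:rho<1}. Taking $\liminf_{n\to\infty}$ gives \eqref{eq:rsw} for aspect ratio $2\rho_0$, and a bounded iteration (each step doubling, or more carefully multiplying, the ratio, and absorbing the floor functions by standard monotonicity in the rectangle dimensions) reaches any $\rho>1$.

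The main obstacle, and the place that needs care rather than routine bookkeeping, is the bookkeeping of which four boundary arcs to feed into Lemma~\ref{l:glueing} and verifying that they are indeed disjoint connected subsets of the interior vertex boundary of the relevant rectangle, listed in counter-clockwise order, at each glueing step; one must choose the overlap block and relabel boundary pieces so that the hypotheses of the lemma are literally met (in particular the "intermediate" sides of the overlap block must be genuine boundary arcs of the rectangle to which the lemma is applied, which forces the big rectangle for each glueing to be taken as an appropriate sub-rectangle rather than the whole thing). A secondary nuisance is handling the floor functions $\lfloor\rho n\rfloor$ and making sure the geometric constraints (overlap of width $\gtrsim n$, heights matching up to $O(1)$) survive; this is handled by the usual observation that $p(m,n)$ is monotone under shrinking $m$ and enlarging $n$, so one can always round to convenient integer dimensions at the cost of nothing. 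I expect the write-up to mirror closely the classical argument (cf.\ the treatments in \cite{G99}) with the single substitution of Corollary~\ref{cor:glueing} for planar intersection of paths.
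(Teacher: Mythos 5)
Your high-level plan (amplify the aspect ratio by glueing a bounded number of crossings, with Corollary~\ref{cor:glueing} standing in for planar path-intersection) is the right instinct and matches the paper's intent, but the specific decomposition you propose — two overlapping horizontal $\rho_0$-rectangles plus one vertical crossing of the overlap, glued pairwise — does not go through with Lemma~\ref{l:glueing}, and the difficulty is not mere bookkeeping. The issue is that the glueing lemma only concludes $\overline{X_1}\leftrightarrow\overline{X_2}$ when the four arcs $X_1,X_2,Y_1,Y_2$ sit on the boundary of a \emph{single} rectangle in cyclic order, and it only connects two of the four; it does not preserve connectivity to $Y_2$. Trace through your scheme: glueing the left crossing (from $L$ of the big box to the interior arc $M_R$) with the vertical crossing of the overlap must be done inside the left sub-rectangle, and yields a cluster meeting $L$ and, say, the bottom-middle arc $B_M$. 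Doing the symmetric thing on the right yields a cluster meeting $R$ and $T_M$ (or $R$ and $B_M$). But on the boundary of the big rectangle the arcs $L,B_M,R,T_M$ appear in that cyclic order, so the endpoint-pairs $\{L,B_M\}$ and $\{R,T_M\}$ (or $\{L,B_M\}$ and $\{R,B_M\}$) do not interleave; the two resulting paths need not cross in the planar projection, and Corollary~\ref{cor:glueing} cannot be applied a third time to finish. Alternatively, trying to glue the two horizontal crossings directly fails because two horizontal crossings of the overlap square need not meet even in the plane.

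The paper's proof avoids this by a different decomposition, which is worth absorbing because it is genuinely the crux. One first glues a left-right crossing of $B(m,n)$ with a top-bottom crossing of the $n\times n$ square at the right end of $B(m,n)$ that, in addition, lands on the \emph{right half} of its bottom side (a reflection argument shows this refinement only costs a factor $1/2$). This first glueing produces a path from $L(m,n)$ to the arc $\overline{[m-\tfrac n2,m)\times\{0\}}$. Reflecting $B(m,n)$ inside $B(2m-n,n)$ across the vertical line through $m-\tfrac n2$ gives, with the same probability, a path from $\overline{[m-n,m-\tfrac n2)\times\{0\}}$ to $R(2m-n,n)$. Now the two paths run from $L$ and $R$ respectively to two \emph{disjoint, adjacent} intervals on the bottom of $B(2m-n,n)$, with the left path landing to the right of where the right path lands; their endpoints therefore interleave cyclically, and one more application of Corollary~\ref{cor:glueing} produces the left-right crossing of $B(2m-n,n)$, yielding $p(2m-n,n)\geq\tfrac14\glue^3 p(m,n)^4$. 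The "land on one half and reflect" device is exactly what your write-up is missing: without it there is no way to arrange the two glued clusters so that their boundary endpoints interleave, and the final glueing step has no valid input.
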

\begin{proof}
This is immediate from the following inequality, which relates 
the crossing probability of a long rectangle with that of a shorter one. 
For all $m>n$, 
\begin{equation}\label{eq:rsw:rho>1}
p(2m-n,n) \geq \frac14\cdot \glue^3\cdot p(m,n)^4.
\end{equation}
The inequality \eqref{eq:rsw:rho>1} follows from two applications of Corollary~\ref{cor:glueing} illustrated on Figure~\ref{fig:glue1}. 
\begin{figure}[H]
\centering
\resizebox{14cm}{!}{\input 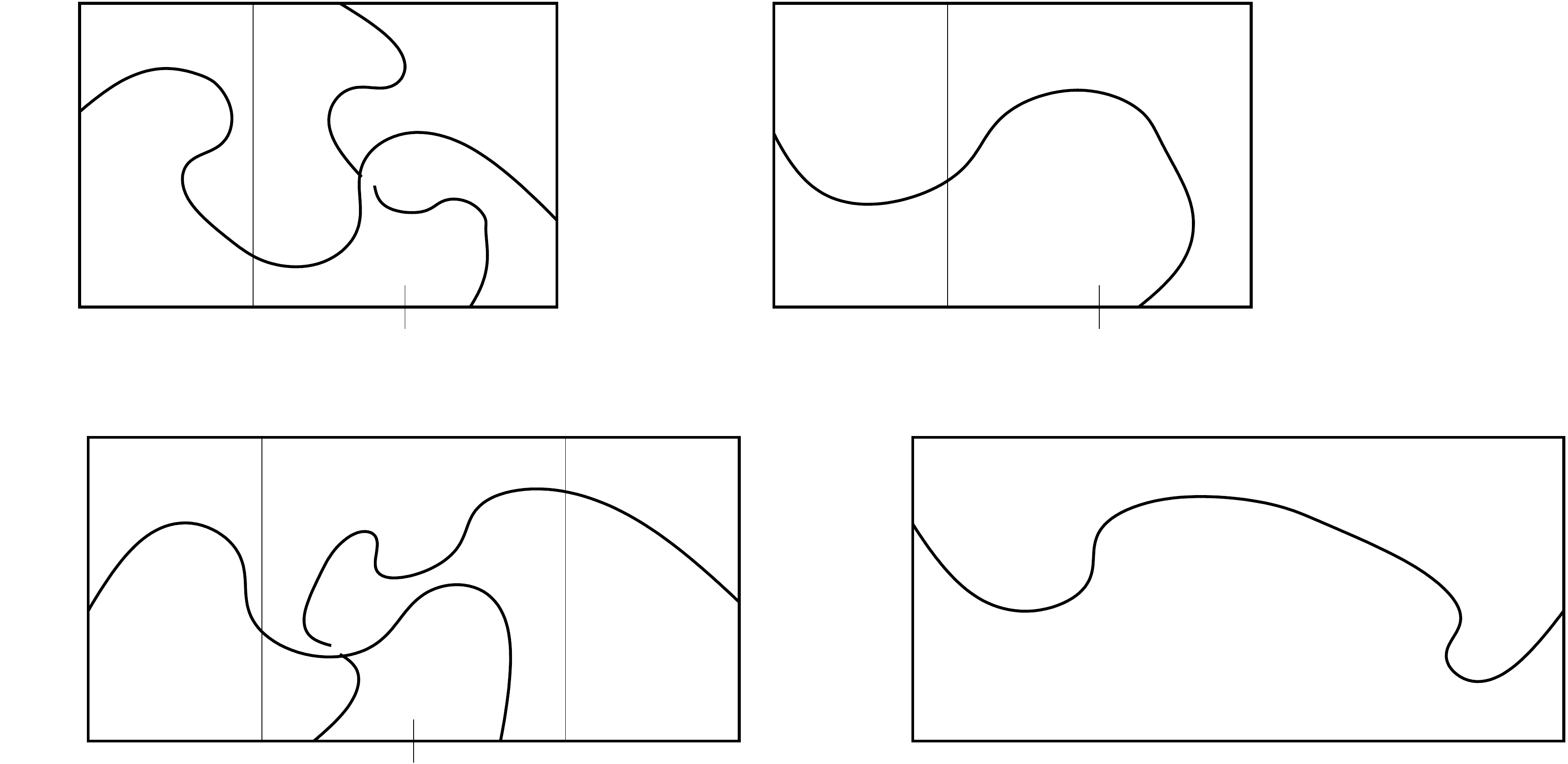_t}
\caption{(a) left-right crossing of $B(m,n)$ and top-bottom crossing of a $\overline{[m-n,m)\times [0,n)}$ landing on the right half of the bottom, 
(b) path from $L(m,n)$ to $\overline{[m-\frac{n}{2},m)\times\{0\}}$ in $B(m,n)$, 
(c) paths from $L(2m-n,n)$ to $\overline{[m-\frac{n}{2},m)\times\{0\}}$ and 
from $\overline{[m-n,m-\frac{n}{2})\times\{0\}}$ to $R(2m-n,n)$ in $B(2m-n,n)$, 
(d) left-right crossing of $B(2m-n,n)$.}
\label{fig:glue1}
\end{figure}
\end{proof}

\section{Crossings of rectangles: short and long directions}
The main contribution of this paper is the following proposition, which relates the crossing probability of 
a rectangle in the long direction with the one in the short. 
\begin{proposition}\label{pr:shorttolong}
For all $n\in\N$,  
\begin{equation}\label{eq:shorttolong}
p(44n,43n)\geq \frac{\glue^{21}\cdot p(43n,44n)^{198}}{10^{154}}.
\end{equation}
\end{proposition}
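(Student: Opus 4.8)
The plan is to produce a left-right crossing of the wide rectangle $B(44n,43n)$ out of a bounded number of crossings of rectangles $B(43n,44n)$ of the ``wrong'' aspect ratio, glued together using Corollary~\ref{cor:glueing}. The crossing event $\mathrm{LR}(43n,44n)$ is a top-bottom crossing of a box that is slightly taller than wide; the width $43n$ is just under $44n$, so many translated copies of such a box can be stacked both vertically (to produce a long crossing in the short direction) and tiled horizontally to sweep across the full width $44n$.

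First I would set up a brick-wall/scaffolding construction at scale $n$. Partition $[0,44n)\times[0,43n)$ into a grid of cells of side roughly $n$, and observe that each individual $\mathrm{LR}(43n,44n)$-type crossing, suitably translated and/or rotated, fits inside $B(44n,43n)$ and traverses a controlled portion of it. The idea is the standard planar one: combine several vertical crossings of tall sub-rectangles with several horizontal crossings of wide sub-rectangles so that, by planarity of the slab (this is where $k\ge 2$ but bounded is essential — the slab is quasi-planar), any such family of crossings is forced to intersect and hence produce a connected left-right path across $B(44n,43n)$. Concretely, I expect to need a ``staircase'' of $O(1)$ crossings: each step moves horizontally by about $43n$ and the vertical overlap between consecutive tall boxes guarantees intersection. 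The numerics $44$ vs $43$ are chosen precisely so that the overlaps are a definite fraction of $n$, independent of $n$.

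Next I would convert this deterministic geometric statement into a probabilistic bound. Each of the $O(1)$ building-block events has probability at least $p(43n,44n)$ (after using symmetry/reflection of the lattice and monotonicity in the box, i.e. a crossing of a smaller sub-box occurs whenever the relevant $\mathrm{LR}(43n,44n)$ event does). The building blocks are not independent, but they are all increasing events, so FKG gives a lower bound that is a product of the individual probabilities — this accounts for the power $p(43n,44n)^{198}$, the exponent being (an upper bound on) the number of blocks used. Wherever two crossing paths need to be merged into one connected cluster but are not a priori in the same box, I would invoke Corollary~\ref{cor:glueing}, each application of which costs a factor $\glue$; with at most $21$ such glueing operations one gets the factor $\glue^{21}$. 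The constant $10^{-154}$ absorbs the combinatorial loss: in the planar argument one typically chooses, among several translates of a box, one in which a crossing ``lands'' in a prescribed sub-interval of the boundary, and a union bound over $O(n)$ translates versus $O(1)$ good ones costs a factor polynomial in the number of sub-intervals — but crucially \emph{not} in $n$, since the relevant intervals have length a fixed fraction of $n$; the explicit huge constant is just the product of all these block counts.

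The main obstacle is the glueing bookkeeping in the slab rather than in the plane. In the planar RSW argument, two crossing paths that share a common sub-rectangle automatically intersect; in the slab $\mathbb S$ they may pass ``over'' one another in the extra coordinates. This is exactly what Lemma~\ref{l:glueing} and Corollary~\ref{cor:glueing} are designed to repair, but applying them requires each merge to be set up in the precise form of the corollary: four disjoint connected arcs $X_1,Y_1,X_2,Y_2$ on the interior vertex boundary of a sub-box, in counter-clockwise order, with the two crossings realizing the $\overline{X_1}\leftrightarrow\overline{Y_1}$ and $\overline{X_2}\leftrightarrow\overline{Y_2}$ events. So the real work is to choose the sub-boxes and the boundary arcs so that (i) the desired left-right path of $B(44n,43n)$ is produced once all merges succeed, (ii) each merge genuinely has the four-arc structure, and (iii) the total number of blocks and merges stays within the stated exponents $198$ and $21$. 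I would organize this as an explicit finite figure (in the spirit of Figure~\ref{fig:glue1}): stack enough vertical $B(44n,43n)$-wrong-ratio crossings to get height $43n$ while reaching horizontally across, then glue successive ones with Corollary~\ref{cor:glueing}, and finally glue the resulting object to the left and right sides $L(44n,43n)$, $R(44n,43n)$. Once the figure is fixed, inequality~\eqref{eq:shorttolong} is just the product of the FKG and glueing factors read off from it.
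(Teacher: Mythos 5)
Your proposal misses the central difficulty of the proposition, which is why no explicit staircase or tiling figure of the sort you sketch can exist. Gluing together left-right crossings of boxes of the aspect ratio $43n\times44n$ (the easy direction) does not, by itself, produce a crossing in the hard direction, because a left-right crossing of a tall narrow box carries no information about \emph{where} on its left and right sides it lands: it might hug the top boundary, or the bottom, and two such crossings in adjacent or overlapping translates need not meet. This obstruction is already present in the plane and has nothing to do with the slab; it is exactly what makes RSW nontrivial. Propositions~\ref{pr:rho<1} and~\ref{pr:rho>1} show how to glue crossings when one already has a crossing in the appropriate hard/long direction as a seed; Proposition~\ref{pr:shorttolong} must supply that seed, and a bounded figure of glued $\mathrm{LR}(43n,44n)$-events in the spirit of Figure~\ref{fig:glue1} cannot do this. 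The ``choose a translate where the crossing lands in a prescribed sub-interval'' step you invoke does not repair the gap: which sub-interval of the boundary is hit is not controllable by pigeonhole and union bound alone, and in the one case that genuinely matters (crossing landing in the \emph{middle} of the side), no amount of rotating or reflecting avoids the issue.

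The paper's proof is consequently of a completely different nature. It is a case analysis by contradiction: either one of three explicit geometric events has probability comparable to $c=p(43n,44n)$ (Steps~1--3), in which case a short, genuine gluing argument succeeds — this is the only place the logic resembles yours, and in fact all three constants $\glue^{21}$, $c^{198}=c^{18\cdot 11}$, $10^{-154}=10^{-14\cdot 11}$ come from the chain of $11$ vertical gluings in Step~3, not from a block count in a deterministic picture; or else assumptions~\eqref{eq:assumption1}--\eqref{eq:assumption3} all hold, and one runs a Bollob\'as--Riordan-type argument. That hard case conditions on the cluster $C_S$ of the middle boundary segment $S$, controls the conditional crossing probability $f(\omega)$ past $C_S$ and the ``near-touch'' probability $g(\omega)$, uses the Paley--Zygmund inequality to get a positive-probability event $A_1\cap A_2\cap A_3$, then reflects the outer boundary $\Gamma$ of $C_S$ across a horizontal hyperplane to build the region $V$, and finally applies Lemma~\ref{l:AB} to a two-arm configuration inside $V$ whose arms have intersecting $\Z^2$-projections. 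None of this is recoverable from ``stack $O(1)$ crossings and pay a $\glue$ per merge''; the conditioning-plus-reflection mechanism is the key idea, and your proposal does not contain it.
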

\begin{proof}
Fix $n\in\N$. 
We write 
\[
B = B(43n,44n), \quad
L = L(43n,44n), \quad
R = R(43n,44n), 
\]
and define 
\[
c = p(43n,44n), \qquad c' = \frac{\glue^{21}\cdot c^{198}}{10^{154}}.
\]
We prove the proposition by considering several cases. 
The first 2 steps are inspired by the ideas of Bollob\'as and Riordan from \cite{BR}, 
and aimed at restricting possible shapes of left-right crossings. 
Steps 3 and 4 contain preliminary estimates needed to implement the main idea in Step 5. 

\paragraph{Step 1.}
We first consider the case when there is a considerable probability that a left-right crossing of $B$ stays away from the top or bottom boundary of $B$, see Figure~\ref{fig:ass1}. 
Assume that 
\[
p(43n,42n) \geq \frac{c}{100}.
\]
\begin{figure}[H]
\centering
\resizebox{5cm}{!}{\input 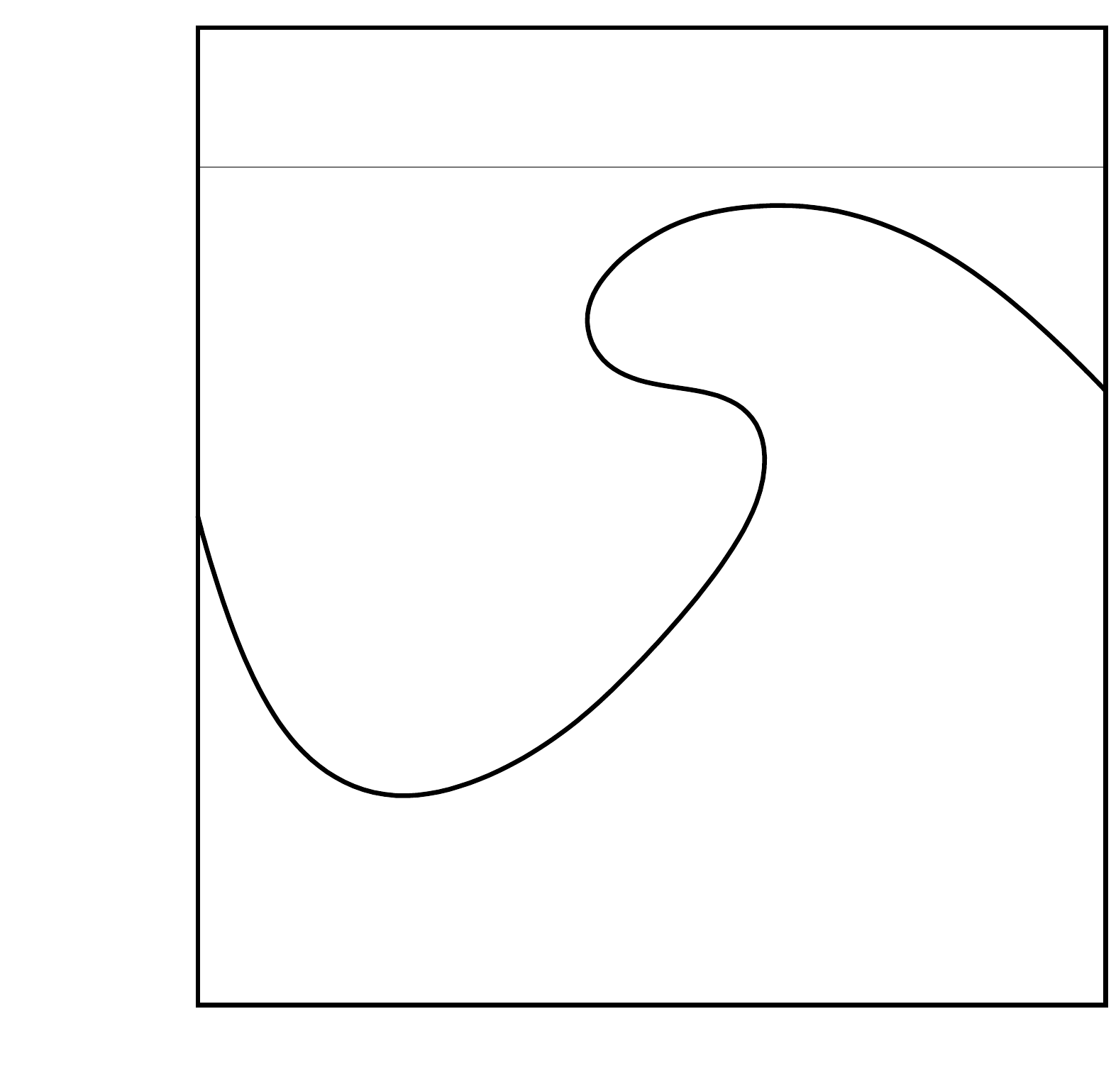_t}
\caption{Left-right crossing staying at least $2n$ away from the top of $B(43n,44n)$.}
\label{fig:ass1}
\end{figure}

Then by \eqref{eq:rsw:rho>1}, 
\[
p(44n,43n)\geq p(44n,42n) \geq \frac14\cdot \glue^3\cdot p(43n,42n)^4 \geq c', 
\]
which implies \eqref{eq:shorttolong}. 
Thus, we may assume that 
\begin{equation}\label{eq:assumption1}
p(43n,42n) < \frac{c}{100}.
\end{equation}

\paragraph{Step 2.}
Next, we consider the case when there is a considerable probability that a left-right crossing of $B$ starts sufficiently far away from the middle of $L$. Let
\begin{equation}\label{def:S}
S = \overline{\{0\}\times[20n,24n)}
\end{equation}
be the middle of $L$.
Assume that 
\[
\P\left[\text{$L\setminus S$ is connected to $R$ in $B$}\right]\geq \frac{c}{10}.
\]
Then, by reflectional symmetry, 
\[
\P\left[\text{$\overline{\{0\}\times[24n,44n)}$ is connected to $R$ in $B$}\right]\geq \frac{c}{20}.
\]
By assumption \eqref{eq:assumption1}, 
\[
\P\left[\text{$\overline{\{0\}\times[24n,44n)}$ is connected to $\overline{[0,43n)\times\{2n\}}$ in $B$}\right]\geq \frac{c}{20} - \frac{c}{100}\geq \frac{c}{100}.
\]
By rotational symmetry, the above display states precisely that 
\[
\P\left[\text{$\overline{\{0\}\times[0,43n)}$ is connected to $\overline{[22n,42n)\times\{0\}}$ in $B(42n,43n)$}\right]\geq \frac{c}{100}.
\]
Similarly to the second application of Corollary~\ref{cor:glueing} in the proof of \eqref{eq:rsw:rho>1}, see Figure~\ref{fig:ass2}, one gets 
\[
p(44n,43n) \geq \glue\cdot \left(\frac{c}{100}\right)^2 \geq c', 
\]
which is precisely \eqref{eq:shorttolong}. 
\begin{figure}[H]
\centering
\resizebox{12cm}{!}{\input 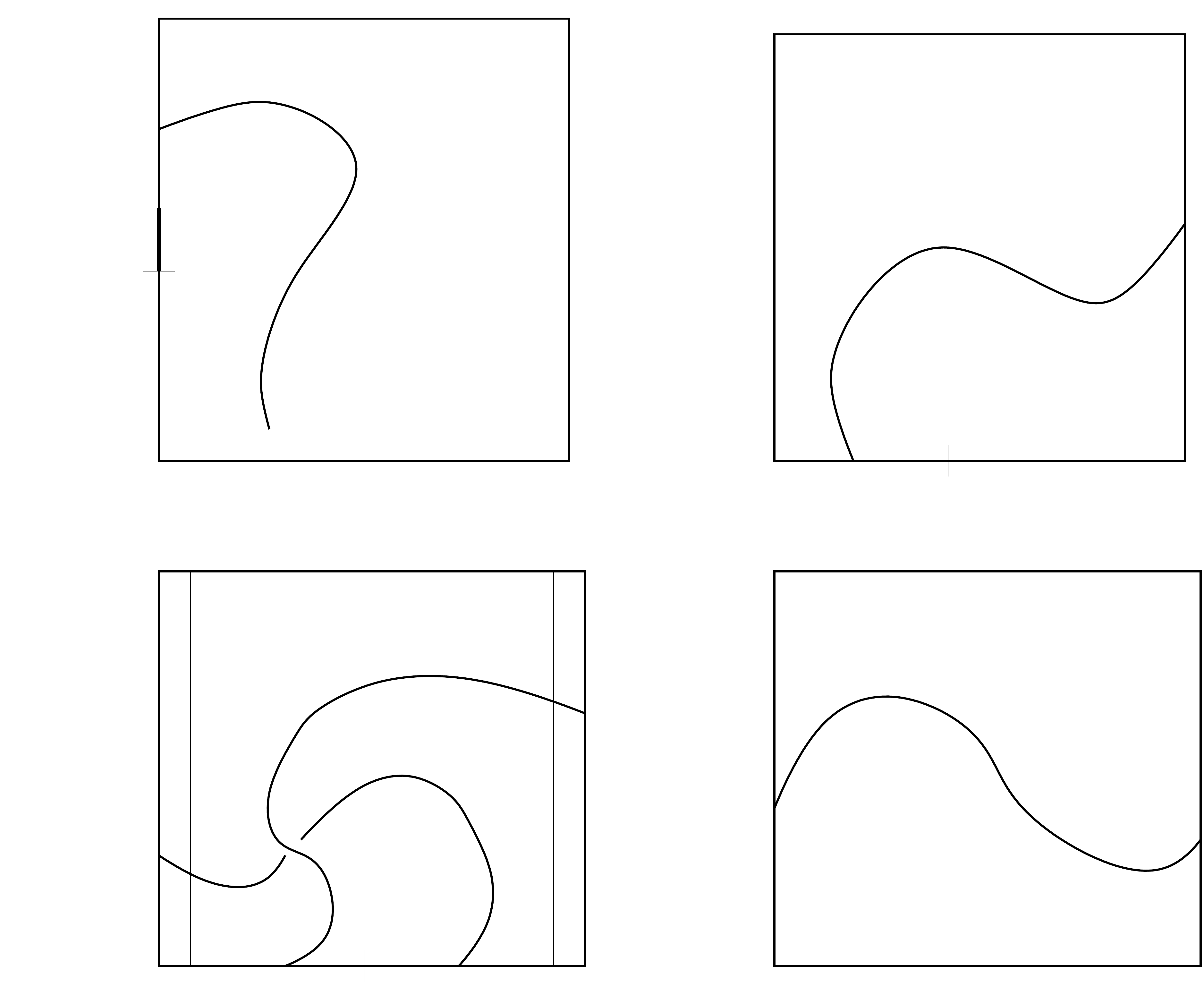_t}
\caption{(a) part of $L$ above $S$ is connected to $\overline{[0,43n)\times\{2n\}}$ in $B$, 
(b) rotation of (a) by $\frac{\pi}{2}$, 
(c) $L(44n,43n)$ is connected to $\overline{[22n,42n)\times\{0\}}$ and $\overline{[2n,22n)\times\{0\}}$ is connected to $R(44n,43n)$, 
(d) left-right crossing of $B(44n,43n)$.}
\label{fig:ass2}
\end{figure}
Thus, we may assume, in addition to \eqref{eq:assumption1}, that 
\begin{equation}\label{eq:assumption2}
\P\left[\text{$L\setminus S$ is connected to $R$ in $B$}\right]< \frac{c}{10}.
\end{equation}

\paragraph{Step 3.}
Here we consider the case when there is a considerable probability that two well-separated subsegments of $L$ are connected. 
For integers $a<b$, let
\[
T_{ab} = \overline{[0,43n)\times[a,b)}\quad\text{and}\quad
T = \overline{[0,43n)\times\Z}.
\]
Assume that for some $a<b$, 
\[
\P\left[\text{$\overline{\{0\}\times[0,4n)}$ is connected to $\overline{\{0\}\times[8n,12n)}$ in $T_{ab}$} \right]\geq \frac{\glue\cdot c^{18}}{10^{14}}.
\]
Then, by repetitive use of Corollary~\ref{cor:glueing}, see Figure~\ref{fig:ass3}, for each $m\geq 1$, 
\[
\P\left[\text{$\overline{\{0\}\times[0,4n)}$ is connected to $\overline{\{0\}\times[4n(m+1),4n(m+2))}$ in $T$} \right]\geq \frac{\glue^{2m-1}\cdot c^{18m}}{10^{14m}}.
\]
\begin{figure}[H]
\centering
\resizebox{12cm}{!}{\input 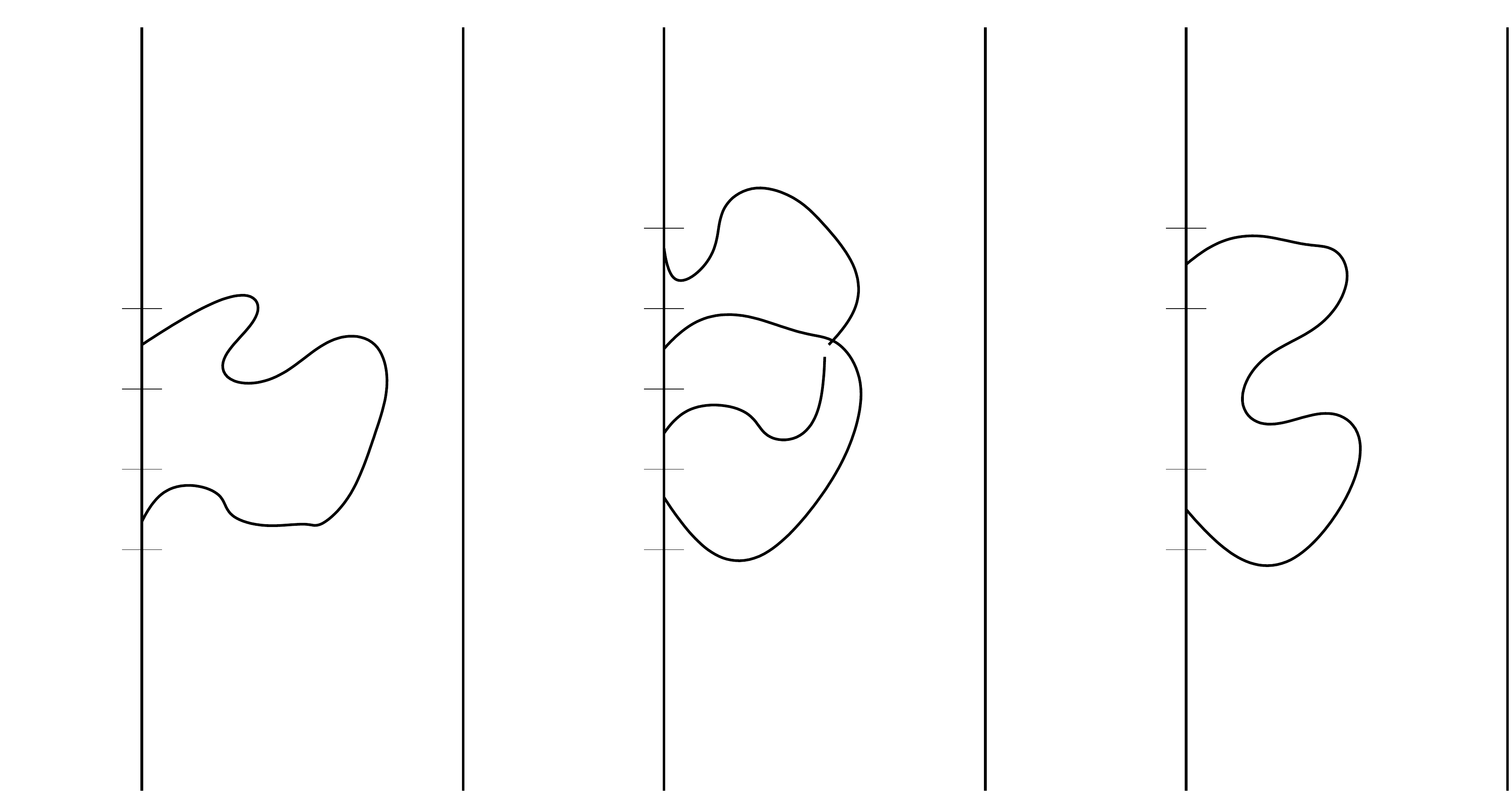_t}
\caption{Vertical extension of open paths.}
\label{fig:ass3}
\end{figure}
Note that if $m=11$, then the event on the left hand side implies that there is a vertical crossing of $\overline{[0,43n)\times[4n,48n)}$. Thus, 
\[
p(44n,43n) \geq \frac{\glue^{21}\cdot c^{198}}{10^{154}} = c',
\]
which gives \eqref{eq:shorttolong}. 
Therefore, we may assume, in addition to \eqref{eq:assumption1} and \eqref{eq:assumption2}, that 
\begin{equation}\label{eq:assumption3}
\P\left[\text{$\overline{\{0\}\times[0,4n)}$ is connected to $\overline{\{0\}\times[8n,12n)}$ in $T_{ab}$} \right]< \frac{\glue\cdot c^{18}}{10^{14}},\qquad\text{for all $a<b$}.
\end{equation}
Next, we derive several corollaries of assumption \eqref{eq:assumption3}.
\begin{corollary}\label{cor:assumption3:1}
Under the assumption \eqref{eq:assumption3}, for all $a<b$, 
\begin{equation}\label{eq:assumption3:1}
\P\left[\text{$\overline{\{0\}\times[8n,12n)}$ is connected to $\overline{\{43n-1\}\times[0,4n)}$ in $T_{ab}$} \right]< \frac{c^{9}}{10^{7}}.
\end{equation}
\end{corollary}
\begin{proof}[Proof of Corollary~\ref{cor:assumption3:1}]
Using reflectional symmetry and Corollary~\ref{cor:glueing}, 
\begin{multline*}
\P\left[\text{$\overline{\{0\}\times[8n,12n)}$ is connected to $\overline{\{43n-1\}\times[0,4n)}$ in $T_{ab}$} \right]^2\\
=
\P\left[\text{$\overline{\{0\}\times[8n,12n)}$ is connected to $\overline{\{43n-1\}\times[0,4n)}$ in $T_{ab}$} \right]\\
\qquad\qquad\qquad\cdot
\P\left[\text{$\overline{\{0\}\times[0,4n)}$ is connected to $\overline{\{43n-1\}\times[8n,12n)}$ in $T_{ab}$} \right]\\
\leq 
\glue^{-1}\cdot 
\P\left[\text{$\overline{\{0\}\times[0,4n)}$ is connected to $\overline{\{0\}\times[8n,12n)}$ in $T_{ab}$} \right] 
\stackrel{\eqref{eq:assumption3}}< \frac{c^{18}}{10^{14}}.
\end{multline*}
\begin{figure}[H]
\centering
\resizebox{12cm}{!}{\input 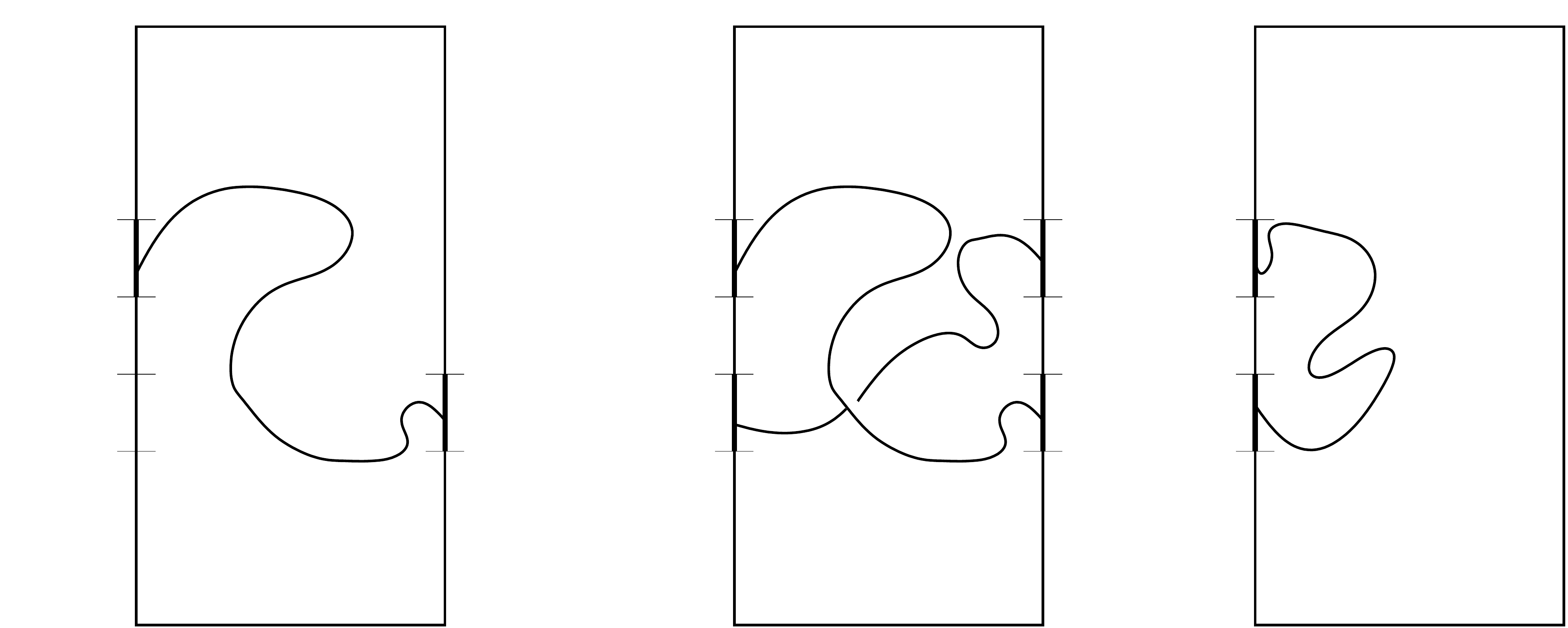_t}
\caption{(a) illustration of the event in \eqref{eq:assumption3:1}, (b) proof of Corollary~\ref{cor:assumption3:1}.}
\end{figure}
\end{proof}

\begin{corollary}\label{cor:assumption3:2}
Under the assumption \eqref{eq:assumption3}, for all $a<b$, 
\begin{equation}\label{eq:assumption3:2}
\P\left[
\begin{array}{c}
\text{there exist a simple path $\gamma$ from $\overline{\{0\}\times[0,4n)}$ to $\overline{\{43n-1\}\times[0,4n)}$ in $T_{ab}$}\\
\text{and a path $\gamma'$ from $\overline{\{0\}\times[8n,12n)}$ in $T_{ab}$, such that}\\
\text{the distance between $\overline\gamma$ and $\overline{\gamma'}$ is $\leq 2$}
\end{array}
\right]< \frac{3\cdot c^9}{10^7}.
\end{equation}
In particular,
\begin{equation}\label{eq:assumption3:2:T}
\P\left[
\begin{array}{c}
\text{there exist a simple path $\gamma$ from $\overline{\{0\}\times[0,4n)}$ to $\overline{\{43n-1\}\times[0,4n)}$ in $T$}\\
\text{and a path $\gamma'$ from $\overline{\{0\}\times[8n,12n)}$ in $T$, such that}\\
\text{the distance between $\overline\gamma$ and $\overline{\gamma'}$ is $\leq 2$}
\end{array}
\right]\leq \frac{3\cdot c^9}{10^7}.
\end{equation}
\end{corollary}
\begin{figure}[h]
\centering
\resizebox{4cm}{!}{\input 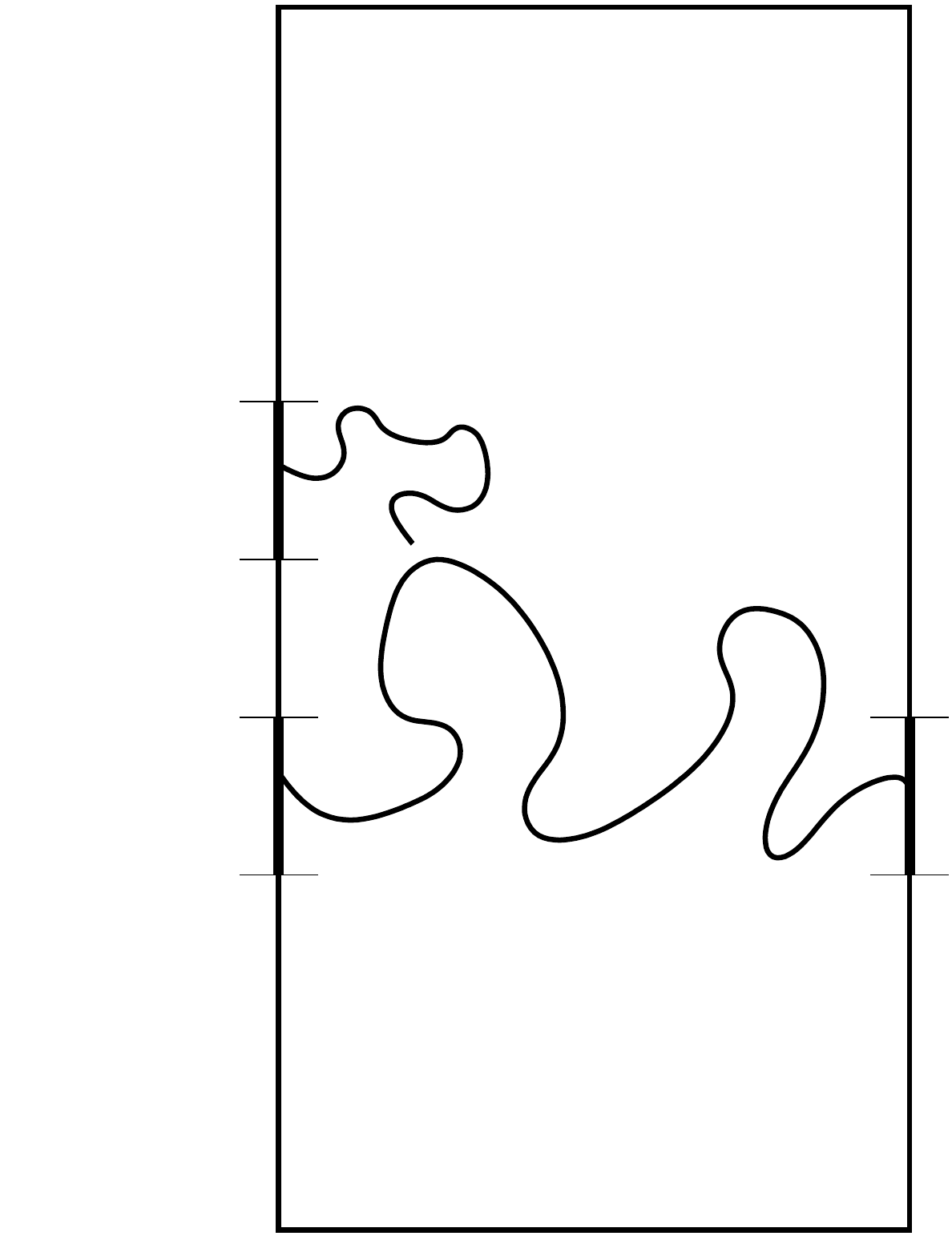_t}
\caption{An illustration of the event in \eqref{eq:assumption3:2}.}
\end{figure}
\begin{proof}[Proof of Corollary~\ref{cor:assumption3:2}]
It suffices to prove \eqref{eq:assumption3:2}, as \eqref{eq:assumption3:2:T} follows from \eqref{eq:assumption3:2} by sending $a\to-\infty$ and $b\to+\infty$. 

Denote the event in \eqref{eq:assumption3:2} by $A$. 
By the total probability formula, 
\begin{multline*}
\P[A]\leq 
\P\left[\text{$\overline{\{0\}\times[8n,12n)}$ is connected to $\overline{\{0\}\times[0,4n)}$ in $T_{ab}$} \right] \\
+ 
\P\left[\text{$\overline{\{0\}\times[8n,12n)}$ is connected to $\overline{\{43n-1\}\times[0,4n)}$ in $T_{ab}$} \right]\\
+ \P\left[
\begin{array}{c}
A, \quad \text{$\overline{\{0\}\times[8n,12n)}$ is not connected to $\overline{\{0\}\times[0,4n)}$ in $T_{ab}$},\\
\text{$\overline{\{0\}\times[8n,12n)}$ is not connected to $\overline{\{43n-1\}\times[0,4n)}$ in $T_{ab}$}
\end{array}
\right].
\end{multline*}
The sum of the first two probabilities is $<\frac{\glue\cdot c^{18}}{10^{14}} + \frac{c^{9}}{10^{7}}$, by the assumption \eqref{eq:assumption3} and \eqref{eq:assumption3:1}. 

Denote by $A'$ the event in the third probability. 
For a configuration $\omega$, let $P(\omega)$ be the set of vertices, which belong to at least one self-avoiding path from 
$\overline{\{0\}\times[0,4n)}$ to $\overline{\{43n-1\}\times[0,4n)}$ in $T_{ab}$, one may call it a backbone. 
Consider a local modification map $f$ from $A'$ to the event 
\[
A'' = \left\{ \omega''~:~ \begin{array}{c}\text{there exists a unique $z(\omega'')\in P(\omega'')$ connected to $\overline{\{0\}\times[8n,12n)}$}\\ 
\text{by an open path contained in $T_{ab}\setminus P(\omega'')$ except for the vertex $z(\omega'')$}\end{array}\right\}
\]
such that for all $\omega'\in A'$ and all $e\notin \overline{z(f(\omega')) + [-3,3]^2\times\{0\}^{d-2}}$, $f(\omega')_e = \omega_e'$. 
By Lemma~\ref{l:AB}, $\P[A'] \leq \matching\cdot \P[A'']\leq \glue^{-1}\cdot \P[A'']$. Since 
\[
A'' \subseteq \left\{\text{$\overline{\{0\}\times[0,4n)}$ is connected to $\overline{\{0\}\times[8n,12n)}$ in $T_{ab}$} \right\},
\]
we conclude that
\[
\P[A']\leq \glue^{-1}\cdot \P\left[\text{$\overline{\{0\}\times[0,4n)}$ is connected to $\overline{\{0\}\times[8n,12n)}$ in $T_{ab}$} \right]< \frac{c^{18}}{10^{14}},
\]
where the last inequality follows from the assumption \eqref{eq:assumption3}. 
Putting the bounds together, 
\[
\P[A]< \frac{\glue\cdot c^{18}}{10^{14}} + \frac{c^{9}}{10^{7}} + \frac{c^{18}}{10^{14}} \leq \frac{3\cdot c^9}{10^7}. 
\]
\end{proof}

\begin{corollary}\label{cor:assumption3:3}
Under the assumptions \eqref{eq:assumption2} and \eqref{eq:assumption3}, 
\begin{equation}\label{eq:assumption3:3}
\P\left[
\begin{array}{c}
\text{there exist a path $\gamma'$ from $\overline{\{0\}\times[0,4n)}$ in $T$}\\
\text{and a path $\gamma''$ from $\overline{\{0\}\times[16n,20n)}$ in $T$, such that}\\
\text{the distance between $\overline{\gamma'}$ and $\overline{\gamma''}$ is $\leq 4$}
\end{array}
\right]\leq \frac{12\cdot c^8}{10^7}.
\end{equation}
\end{corollary}
\begin{figure}[H]
\centering
\resizebox{4cm}{!}{\input 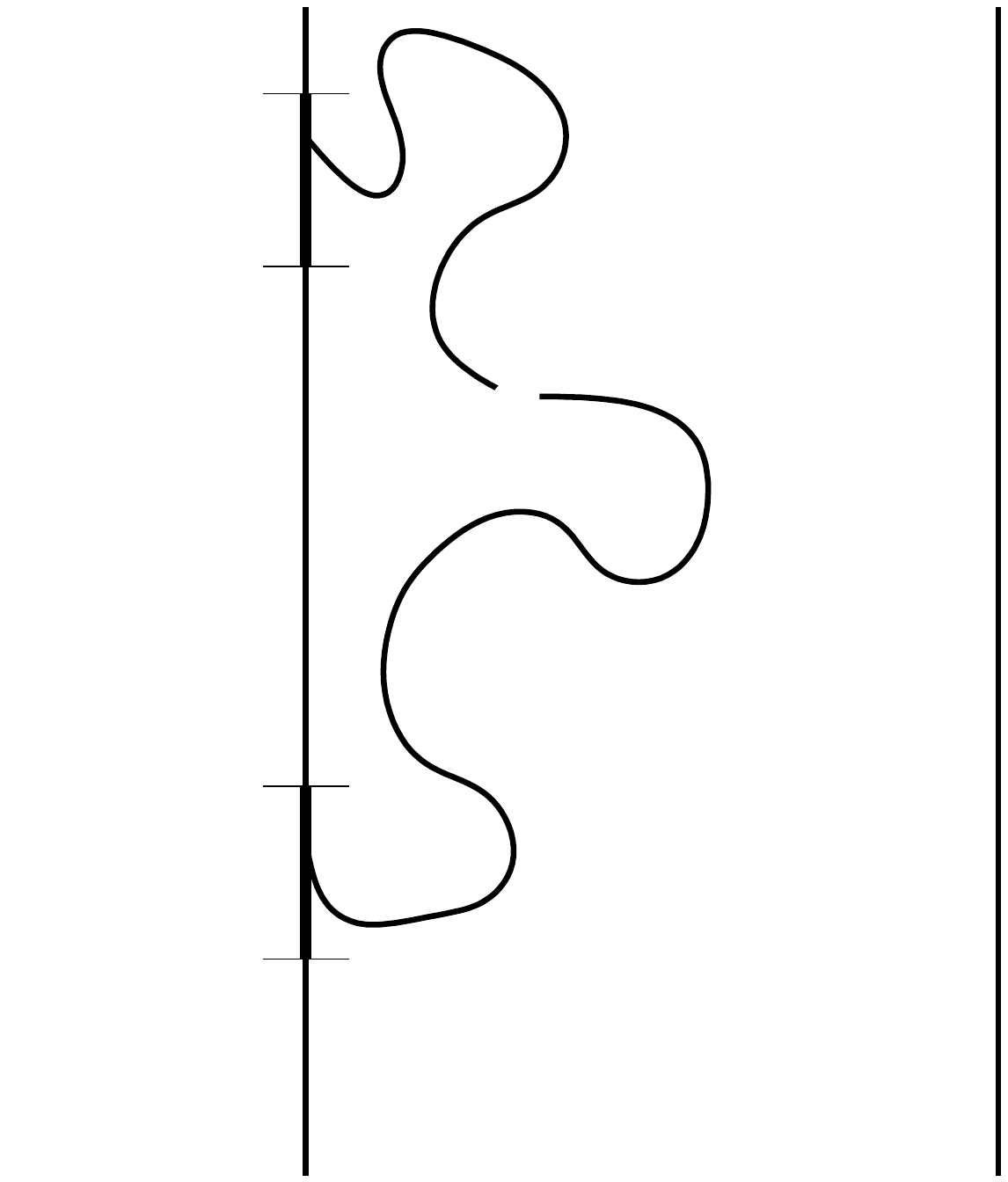_t}
\caption{An illustration of the event in \eqref{eq:assumption3:3}.}
\end{figure}
\begin{proof}[Proof of Corollary~\ref{cor:assumption3:3}]
Denote the event in \eqref{eq:assumption3:3} by $A$. 
By assumption \eqref{eq:assumption2}, 
\[
\P\left[\text{$\overline{\{0\}\times[8n,12n)}$ is connected to $\overline{\{43n-1\}\times[8n,12n)}$ in $T$}\right]\geq c - 2\, \frac{c}{10} \geq \frac{c}{2}.
\]
Since the above event and the event $A$ are increasing, by the FKG inequality, 
\[
\P[A] 
\leq \frac{2}{c}\cdot 
\P\left[A,\quad 
\text{$\overline{\{0\}\times[8n,12n)}$ is connected to $\overline{\{43n-1\}\times[8n,12n)}$ in $T$}
\right].
\]
The intersection of the two events on the right hand side implies that for any path $\gamma$ from $\overline{\{0\}\times[8n,12n)}$ to $\overline{\{43n-1\}\times[8n,12n)}$ in $T$, 
the distance from $\overline\gamma$ to $\overline{\gamma'}\cup\overline{\gamma''}$ is $\leq 2$. Thus, by \eqref{eq:assumption3:2:T}, 
\[
\P[A]\leq \frac{2}{c}\cdot 2\,\frac{3\cdot c^9}{10^7} = \frac{12\cdot c^8}{10^7}.
\]
\end{proof}

\paragraph{Step 4.}
The aim of this step is to introduce a certain event of positive probability, see Proposition~\ref{pr:mainevent}. 
Our choice of this event will be clarified in Step~5. 

Recall the definition of $S$ from \eqref{def:S}. 
For a configuration $\omega$, let $C_S = C_S(\omega)$ be the set of all $z\in T$ connected to $S$ by an open path in $T$. 
Let
\[
f(\omega) = 
\P\left[
\text{$\overline{\{0\}\times[4n,8n)}$ is connected to $\overline{\{43n-1\}\times\Z}$ in $T\setminus \overline{C_S(\omega)}$}
~\Big|~C_S(\omega)\right],
\]
and
\[
g(\omega) = 
\P\left[
\begin{array}{c}
\text{there exists a path $\gamma'$ from $\overline{\{0\}\times[4n,8n)}$ in $T$, such that}\\
\text{the distance between $\overline{\gamma'}$ and $\overline{C_S(\omega)}$ is $\leq 4$}
\end{array}
~\Big|~C_S(\omega)
\right].
\]
We consider the following events:
\begin{align*}
A_1 &= \left\{\text{$S$ is connected to $\overline{[0,43n)\times\{2n\}}$ in $T$}\right\},\\
A_2 &= \left\{f(\omega)\geq \frac{c^2}{10}\right\},\\
A_3 &= \left\{g(\omega)\leq \frac{c^4}{1000}\right\}.
\end{align*}
\begin{proposition}\label{pr:mainevent}
Under the assumptions \eqref{eq:assumption1}, \eqref{eq:assumption2}, and \eqref{eq:assumption3}, 
\[
\P[A_1\cap A_2\cap A_3]\geq \frac{c^4}{10^3}.
\]
\end{proposition}
\begin{proof}[Proof of Proposition~\ref{pr:mainevent}]
By assumptions \eqref{eq:assumption1} and \eqref{eq:assumption2}, 
\[
\P[A_1] \geq c - \frac{c}{10} - \frac{c}{100} \geq \frac{c}{2}. 
\]
By the Markov inequality and \eqref{eq:assumption3:3}, 
\[
\P[A_3^c] \leq \frac{1000}{c^4}\cdot \mathbb E[g] < \frac{1000}{c^4}\cdot\frac{12\cdot c^8}{10^7} = \frac{12\cdot c^4}{10^4}.
\]
To bound $\P[A_1\cap A_2]$ from below we use the Paley-Zygmund inequality. Using \eqref{eq:assumption3:3} and the FKG inequality, we first estimate
\begin{eqnarray*}
\mathbb E[f(\omega)\cdot \mathds{1}_{A_1}] 
&= 
&\P\left[
\begin{array}{c}
\text{$S$ is connected to $\overline{[0,43n)\times\{2n\}}$ in $T$, and }\\
\text{$\overline{\{0\}\times[4n,8n)}$ is connected to $\overline{\{43n-1\}\times\Z}$ in $T\setminus \overline{C_S(\omega)}$}
\end{array}
\right]\\[7pt]
&\geq 
&\P\left[
\begin{array}{c}
\text{$S$ is connected to $\overline{[0,43n)\times\{2n\}}$ in $T$, and }\\
\text{$\overline{\{0\}\times[4n,8n)}$ is connected to $\overline{\{43n-1\}\times\Z}$ in $T$}
\end{array}
\right] - \frac{12\cdot c^8}{10^7}\\[7pt]
&\geq 
&\P[A_1]\cdot \P\left[\text{$\overline{\{0\}\times[4n,8n)}$ is connected to $\overline{\{43n-1\}\times\Z}$ in $T$}\right] - \frac{12\cdot c^8}{10^7}\\[7pt]
&\geq 
&\frac{c}{2}\left(c - \frac{c}{10}\right) - \frac{12\cdot c^8}{10^7} \geq \frac{c^2}{5}.
\end{eqnarray*}
The Paley-Zygmund inequality for non-negative random variable $X$ states that $\mathbf P[X\geq \frac12 \mathbf E[X]] \geq \frac14\frac{(\mathbf E[X])^2}{\mathbf E[X^2]}$. 
We apply it to the measure $\mathbf P[\cdot] = \mathbb E \left[\mathds{1}_\cdot \frac{\mathds{1}_{A_1}}{\P[A_1]}\right]$, to get
\[
\mathbb E\left[\mathds{1}_{f(\omega)\geq \frac12\cdot\mathbb E[f(\omega)\cdot \frac{\mathds{1}_{A_1}}{\P[A_1]}]} \cdot \frac{\mathds{1}_{A_1}}{\P[A_1]}\right]
\geq \frac14\cdot \left(\mathbb E[f(\omega)\cdot \frac{\mathds{1}_{A_1}}{\P[A_1]}]\right)^2 .
\]
Thus, 
\[
\P[A_1\cap A_2] \geq \frac{c^4}{100},
\]
and we conclude that 
\[
\P[A_1\cap A_2\cap A_3] \geq \frac{c^4}{100} - \frac{12\cdot c^4}{10^4} \geq \frac{c^4}{10^3}.
\]
\end{proof}

\paragraph{Step 5.} We are ready to conclude. 
For a configuration $\omega$, let $Q(\omega)$ be the set of vertices from $T$, which are connected to $S$ by an open path in $\overline{[0,43n)\times[2n,\infty)}$.
Let $\Gamma(\omega)$ be the outer vertex boundary of $\overline{Q(\omega)}$, 
and $\Gamma'(\omega)$ the mirror reflection of $\Gamma$ with respect to the hyperplane $\{x~:~x_2 = 2n-\frac12\}$.
We denote the connected component of $T\setminus(\Gamma\cup\Gamma')$ which contains $0$ by $V$. 
Note that $V$ is finite for any $\omega\in A_1$. 

Let $X = \overline{\{0\}\times[4n,8n)}$, and $X' = \overline{\{0\}\times[-4n,0)}$. Note that $X'$ is the mirror reflection of $X$ with respect to the hyperplane $\{x~:~x_2 = 2n-\frac12\}$. 
Moreover, if $\omega\in A_2\cap A_3$, then both $X$ and $X'$ are contained in $V$. 
\begin{figure}[H]
\centering
\resizebox{5cm}{!}{\input 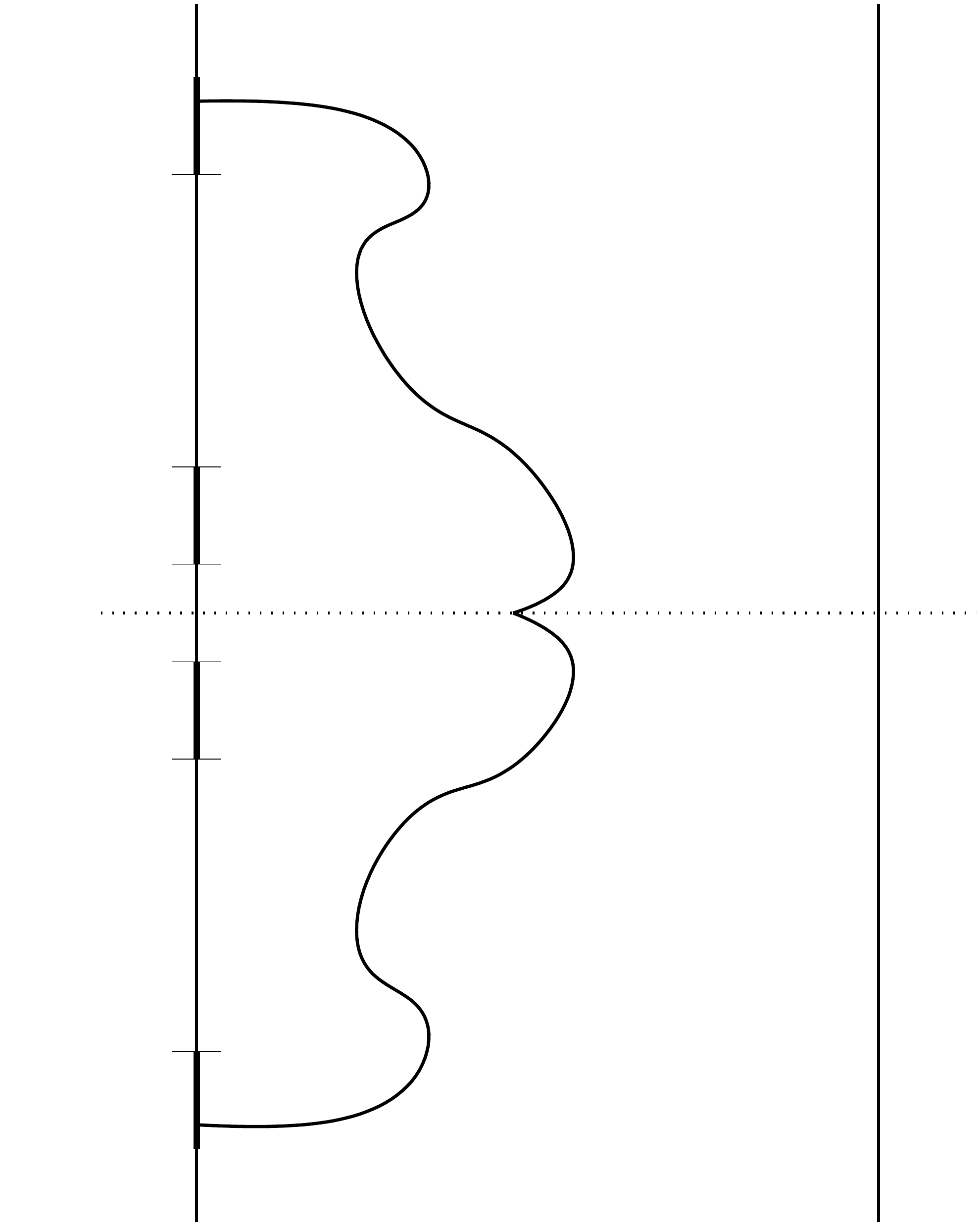_t}
\caption{An illustration of $\Gamma$, $\Gamma'$, and $V$ for a configuration from the event $A_1\cap A_2\cap A_3$. 
$\Gamma$ is the outer vertex boundary of the cluster of $S$ in $\overline{[0,43n)\times[2n,+\infty)}$, 
$\Gamma'$ is its mirror reflection with respect to the hyperplane $\{x~:~x_2 = 2n-\frac12\}$, 
and $V$ is the connected component of $T\setminus(\Gamma\cup\Gamma')$ containing the origin.}
\end{figure}

We consider an auxiliary probability space $\Omega'$ with configurations $\omega'$ and the same probability measure $\P$ on it. 
We compute 
\begin{multline*}
\P\left[\text{$X$ is connected to $X'$ in $T$ by an open path in $\omega'$}\right]\\[7pt]
\geq 
\P\otimes\P\left[(\omega,\omega')~:~
\begin{array}{c}
\omega\in A_1\cap A_2\cap A_3,\\
\text{$X$ is connected to $X'$ in $V(\omega)$ by an open path in $\omega'$}
\end{array}
\right]\\[7pt]
\stackrel{(*)}\geq 
\matching^{-1}\cdot 
\P\otimes\P\left[(\omega,\omega')~:~
\begin{array}{c}
\omega\in A_1\cap A_2\cap A_3,\\
\text{$X$ is {\it not} connected to $X'$ in $V(\omega)$ by an open path in $\omega'$}\\
\text{$X$ is connected to $\Gamma'(\omega)$ in $V(\omega)$ by an open path in $\omega'$,}\\
\text{$X'$ is connected to $\Gamma(\omega)$ in $V(\omega)$ by an open path in $\omega'$,}\\
\text{there is no open path $\pi$ in $\omega'$ from $X$ in $V(\omega)$}\\
\text{so that the distance between $\overline\pi$ and $\Gamma(\omega)$ is $\leq 4$,}\\
\text{there is no open path $\pi'$ in $\omega'$ from $X'$ in $V(\omega)$}\\
\text{so that the distance between $\overline{\pi'}$ and $\Gamma'(\omega)$ is $\leq 4$}
\end{array}
\right]\\[7pt]
\geq
\matching^{-1}\cdot 
\mathbb E_\omega\left[
\mathds{1}_{A_1\cap A_2\cap A_3}(\omega)\cdot 
\P_{\omega'}\left[
\begin{array}{c}
\text{$X$ is connected to $\Gamma'(\omega)$ in $V(\omega)$ by an open path in $\omega'$,}\\
\text{$X'$ is connected to $\Gamma(\omega)$ in $V(\omega)$ by an open path in $\omega'$}
\end{array}
\right]\right]\\[7pt]
- 
\matching^{-1}\cdot 
\mathbb E_\omega\left[
\mathds{1}_{A_1\cap A_2\cap A_3}(\omega)\cdot 
\P_{\omega'}\left[
\begin{array}{c}
\text{there is an open path $\pi$ in $\omega'$ from $X$ in $V(\omega)$}\\
\text{so that the distance between $\overline\pi$ and $\Gamma(\omega)$ is $\leq 4$,}\\
\text{or}\\
\text{there is an open path $\pi'$ in $\omega'$ from $X'$ in $V(\omega)$}\\
\text{so that the distance between $\overline{\pi'}$ and $\Gamma'(\omega)$ is $\leq 4$}
\end{array}
\right] 
\right]\\[7pt]
- \matching^{-1}\cdot \P\left[\text{$X$ is connected to $X'$ in $T$ by an open path in $\omega'$}\right]\\[7pt]
\geq
\matching^{-1}\cdot 
\mathbb E_\omega\left[
\mathds{1}_{A_1\cap A_2\cap A_3}(\omega)\cdot 
\left[f(\omega)^2 - 2g(\omega)\right]\right]\\[7pt]
- \matching^{-1}\cdot \P\left[\text{$X$ is connected to $X'$ in $T$ by an open path in $\omega'$}\right].
\end{multline*}
The inequality ($*$) follows from Lemma~\ref{l:AB} and a similar local transformation as in the proof of Corollary~\ref{cor:assumption3:2}. 
(Mind that every path from $X$ to $\Gamma'$ in $V$ and every path from $X'$ to $\Gamma$ have intersecting projections, and 
all the ``intersection points'' are sufficiently far away from $\Gamma\cup\Gamma'$ to allow for a local modification far away from $\Gamma\cup\Gamma'$.)
The last inequality comes from the FKG inequality and the definitions of event $A_1$ and functions $f$ and $g$. 

By the definition of events $A_2$ and $A_3$ and Proposition~\ref{pr:mainevent}, 
\[
\P\left[\text{$X$ is connected to $X'$ in $T$}\right]
\geq
\glue\cdot \left(\frac{c^4}{100} - 2\cdot \frac{c^4}{1000}\right)\cdot \frac{c^4}{10^3}\,. 
\]
In particular, there exist $a<b$ such that 
\[
\P\left[\text{$X$ is connected to $X'$ in $T_{ab}$}\right]
\geq \frac{\glue\cdot c^8}{10^6}.
\]
From this we conclude, as in the argument of Step~3, that $p(44n,43n)\geq c'$ (or simply observe that the above inequality contradicts the assumption \eqref{eq:assumption3}). 

\medskip

The proof of Proposition~\ref{pr:shorttolong} is complete. 
\end{proof}

\end{document}